\numberwithin{equation}{section}
\renewcommand{\epsilon}{\varepsilon}
\newcommand{\be}{\begin{equation}}
\newcommand{\ee}{\end{equation}}
\newcommand{\no}{\nonumber}
\newcommand{\spec}{\mathrm{spec}}
\newcommand{\N}{\mathbb{N}}
\newcommand{\Q}{\mathbb{Q}}
\newcommand{\R}{\mathbb{R}}
\newcommand{\T}{\mathbb{T}}
\newcommand{\Z}{\mathbb{Z}}
\newcommand{\cE}{{\mathcal E}}
\newcommand{\cF}{{\mathcal F}}
\newcommand{\cH}{{\mathcal H}}
\renewcommand{\det}{\mathop{\mathrm{det}}}
\newtheorem{theorem}{Theorem}[section]
\newtheorem{lemma}[theorem]{Lemma}
\newtheorem{corollary}[theorem]{Corollary}
\newtheorem{remark}[theorem]{Remark}
\date{\today}
\begin{document}

\title[Number of bound states of the Schr\"odinger operator of a system of three
bosons...] {Number of bound states of  the Schr\"odinger operator of
a system of three bosons in an optical lattice.}

 \author{Saidakhmat N.\ Lakaev,
 Alimzhan R.\ Khalmukhamedov,\\ Ahmad M.\ Khalkhuzhaev}



\begin{abstract}
We consider the Hamiltonian $\hat {\mathrm{H}}_{\mu}$ of a system of
three identical particles(bosons) on the $d-$ dimensional lattice
$\Z^d, d=1,2$ interacting via pairwise zero-range attractive
potential $\mu<0$. We describe precise location and structure of the
essential spectrum of the Schr\"odinger operator $H_\mu(K),K\in
\T^d$ associated to $\hat {\mathrm{H}}_\mu$ and prove the finiteness
of the number of bound states of $H_\mu(K),K\in \T^d$ lying below
the bottom of the essential spectrum. Moreover, we show that bound
states decay exponentially at infinity and eigenvalues and
corresponding bound states of $H_\mu(K),K\in \T^d$ are regular as a
function of center of mass quasi-momentum $K\in \T^d$.
\end{abstract}

\maketitle

Subject Classification: {Primary: 81Q10, Secondary: 35P20, 47N50}
Keywords: {Discrete Schr\"{o}dinger operator, system of
three-particle, Hamiltonian, zero-range interaction, eigenvalue,
bound state, essential spectrum, lattice.}

\section{ Introduction}
Cold atoms loaded in an optical lattice provide a realization of a
quantum lattice gas. The periodicity of the potential gives rise to
a band structure for the dynamics of the atoms.

The dynamics of the ultracold atoms loaded in the lower band is well
described by the Bose-Hubbard hamiltonian \cite{Letters}; we  give
in section 2 the corresponding Sch\"odinger operator.

In the continuous case \cite{Faddeev},\cite{FadMer,RSIII} the energy
of the center-of-mass motion can by separated out from the total
Hamiltonian,  i.e., the energy operator can by split into a sum of a
center-of-mass motion and a relative kinetic energy. So that the
three-particle {\it bound states} are eigenvectors of the relative
kinetic energy operator.

The kinematics of the quantum particles on the lattice is rather
exotic. The discrete laplacian \it is not translationally invariant
\rm and therefore one cannot separate the motion of the center of
mass.

One can rather resort to a Bloch-Floquet decomposition. The
three-particle Hilbert space $ \mathcal{H} \equiv \ell ^2 ({\Z}^d)^3
$ is decomposed as direct  integral associated to the representation
of the discrete group $ {\Z}^d $ by shift operators.
\begin{equation*}
\ell ^2[({\Z}^d)^3] = \int_{{\T}^d} \oplus \ell
^2[({\Z}^d)^2]\eta(dK),
\end{equation*}
where $\eta(dp)$ is the (normalized) Haar measure on the torus
$\T^d$. Hence the total three-body Hamiltonian $\mathrm{H}$ of a
system of three particles on $d-$ dimensional lattice $\Z^d, d
\geq1$ interacting via pairwise short range attractive potential $V$
appears to be decomposable
\begin{equation*}\label{decompose}
\mathrm{H}=\int\limits_{\T^d}\oplus H(K)\eta(dK).
\end{equation*}

The fiber hamiltonians $H(K) $ depends parametrically on the {\it
quasi momentum} $ K \in \T^d \equiv \R^d / (2 \pi {\Z}^d) .$ It is
the sum of a free part depending on $K$ continuously and an
interaction term, both bounded.

Bound states $\psi_{E,K}$ are solution of the Schr\"odinger equation
\begin{equation*}
H(K) \psi_{E,K} = E \psi_{E,K}  \qquad \psi_{E,K} \in \ell ^2
[(\Z^d)^2].
\end{equation*}

The Efimov effect is one of the remarkable results in the spectral
analysis of Hamiltonians associated to a system of three-particles
moving on the three-dimensional Euclid space:
if none of the three two-particle Schr\"{o}dinger operators
(associated to the two-particle subsystems of a three-particle
system) has negative eigenvalues, but at least two of them have zero
energy resonance, then the three-particle Schr\"{o}dinger operator
has an infinite number of discrete eigenvalues, accumulating at zero
\cite{ALM,ALzM04,ALKh,Efimov,Lak93,Sob,Tam,Yaf}.


The finiteness of eigenvalues (absence Efimov's effect) have been
proved for the Hamiltonian of a system of three particles moving on
$d=1,2$ dimensional Euclid space $\R^d$ in \cite{VZh83}.

We consider the Hamiltonian $\hat{\mathrm{H}}_{\mu}$ of a system of
three identical particles(bosons) on $d-$ dimensional lattice $\Z^d,
d=1,2,$ interacting via pairwise zero range attractive potential
$\mu<0$.

We prove the finiteness  of the number of bound states(absence of
Efimov's effect) of the Schr\"odinger operator $H_{\mu}(K),\,K\in
\mathbb{G}\subseteq\T^d,d=1,2,$ where $\mathbb{G}\subseteq\T^d$ is a
region, associated to the Hamiltonian $\hat {\mathrm{H}}_\mu$.

We describe a precise location and structure of the essential
spectrum of the Schr\"odinger operator $H_\mu(K),K\in \T^d$.
Moreover, we show that bound states decay exponentially at infinity
and we establish that the eigenvalues and corresponding bound states
of $H_\mu(K),K\in \T^d$ are regular as a function of center of mass
quasi-momentum $K\in \T^d$.

In \cite{MumAli12} finiteness of the eigenvalues of the discrete
Schr\"odinger operator associated to a system of three-bosons on one
dimensional lattice $\Z^1$ has been shown.

Section 1 is an introduction. In Section 2 we introduce the
Hamiltonians of systems of two and three-particles in coordinate and
momentum representations as bounded self-adjoint operators in the
corresponding Hilbert spaces.

In Section 3 we introduce the total quasi-momentum, decompose the
energy operators into von Neumann direct integrals, introduce
discrete Schr\"odinger operators $h_{\mu}(k),k\in \T^d$ and
$H_{\mu}(K),K\in \T^d$, choosing relative coordinate system.

We state the main results in Section 4.

We introduce the {\it channel operators} and describe the essential
spectrum of $H_{\mu}(K)$ by means of the discrete spectrum of the
two particle Schr\"odinger operators $h_\mu(k),k\in \T^d$ (Theorem
\ref{essTheorem}) in section 5.

In Section 6 we prove the finiteness of the number of eigenvalues of
the three-particle Schr\"{o}dinger operator $H_{\mu}(K),K\in \T^d$
(Theorem \ref{finite}) and finiteness of isolated bands in a system
of three particles in an optical lattice.

\section{Hamiltonians of three identical
particles on a lattices in the coordinate and momentum
representations} Let ${\Z}^{d},d=1,2$ be the $d$-dimensional
lattice. Let $\ell^2[({\Z}^{d})^{m}],d=1,2$ be Hilbert space of
square-summable functions $\,\,\hat{\varphi}$ defined on the
Cartesian power of $({\Z}^{d})^{m},d=1,2$ and let
$\ell^{2,s}[(\Z^{d})^{m}]\subset \ell^{2}[(\Z^{d})^{m}]$ be the
subspace of symmetric functions.

Let $\Delta$ be the lattice Laplacian, i.e., the operator which
describes the transport of a particle from one site to another site:
$$ (\Delta\hat{\psi})(x)= \frac{1}{2}\sum_{\mid s\mid =1} [\hat{\psi}(x)-
\hat{\psi}(x+s)],\quad \hat{\psi}\in\ell^2({\Z}^d).$$ The free
Hamiltonian $\hat{\mathrm{h}}_0$ of a system of two identical
quantum mechanical particles with mass $m=1$ on the $d$-dimensional
lattice $\Z^d,d=1,2$ in the coordinate representation is associated
to the self-adjoint operator $\hat{\mathrm{h}}_0$ in the Hilbert
space $\,\,\ell^{2,s}[(\Z^{d})^{2}]$:
\begin{equation}\no
\hat{\mathrm{h}}_0={\Delta}\otimes I + I\otimes\Delta,
\end{equation}
where $\otimes$ denotes the tensor product and $I$ is the identity
operator on $L^2(\Z^d)$.
 The Hamiltonian $\hat {\mathrm{h}}_\mu$ of a system of two
 identical particles  with the two-particle pair zero-range attractive interaction
$\mu\hat v$ is a bounded perturbation of the free Hamiltonian
$\hat{\mathrm{h}}_0$ on the Hilbert space $\ell^{2,s}[( {\Z}^d)^2]$
\begin{equation*}\label{two-part}
\hat{\mathrm{h}}_\mu =\hat{\mathrm{h}}_0+\mu\hat v.
\end{equation*}
Here $\mu<0$ is coupling constant and
\begin{equation*}
(\hat v\hat \psi)(x_1,x_2) = \delta _{x_1 x_2}
{\hat\psi}(x_1,x_2),\quad {\hat\psi} \in \ell^{2,s}[({\Z}^d)^2],
\end{equation*}
where $\delta _{x_1 x_2}$ is  the Kronecker delta.

Similarly, the free Hamiltonian $\hat{\mathrm{H}}_0$ of a system of
three identical particles on the $d$-dimensional lattice $\Z^d$ with
mass $m=1$ is defined on the Hilbert space $\ell^{2,s}[({\Z}^d)^3]$:
\begin{equation*}\label{free0}
\widehat H_0=\Delta\otimes I\otimes I + I \otimes \Delta \otimes I +
I\otimes I\otimes \Delta.
\end{equation*}
The Hamiltonian $ \hat{\mathrm{H}}_\mu $  of a system of three
identical particles with the two-particle pair zero-range
interactions $\hat v=\hat v_{\alpha}=\hat
v_{\beta\gamma},\alpha,\beta,\gamma=1,2,3$ is a bounded perturbation
of the free Hamiltonian $\hat {\mathrm{H}}_0$
\begin{equation*}\label{total}
 \hat {\mathrm{H}}_\mu=\hat {\mathrm{H}}_0+\mu\hat {\mathbb{V}},
\end{equation*}
where $\mathbb{V}=\sum_{\alpha=1}^3 \hat
{V}_{\alpha},\,{V}_{\alpha}=\hat V, \alpha=1,2,3 $ is the
multiplication operator on $\ell^{2,s}[({\Z}^d)^3]$ defined by
\begin{align*}
&(\hat V_{\alpha}\hat\psi)(x_1,x_2,x_3)=  \delta _{x_\beta
x_\gamma}\hat\psi(x_1,x_2,x_3),\\
&\alpha\prec\beta\prec\gamma\prec\alpha,\,\alpha,\beta,\gamma=1,2,3,\,\,
\hat\psi\in \ell^{2,s}[({\Z}^d)^3].\\
\end{align*}
\subsection{ The momentum  representation}
 Let ${\T}^d=(-\pi,\pi]^d$ be the $d-$ dimensional torus and
$L^{2,s}[(\mathbb{T}^{d})^{m}]\subset L^2[(\mathbb{T}^{d})^{m}]$ be
the subspace of symmetric functions defined on the  Cartesian power
${({\T}^d)^m},\,m\in \N$.

Let $\cF: \ L^2(\T^d) \to \ell^2(\Z^d)$  be the standard Fourier
transform
\begin{equation*} \label{eq-0.9}
\cF: \ \ell^2(\Z^d) \to L^2(\T^d), \quad [\cF(f)](p) \ := \ \sum_{x
\in \Z^d} e^{-\mathrm{i} (p, x)} \:  f(x)
\end{equation*}
with the inverse
\begin{equation*} \label{eq-0.10}
\cF^*: \ L^2(\T^d) \to \ell^2(\Z^d), \quad [\cF^*(\psi)](x) \ := \
\int_{ \T^d} e^{\mathrm{i}(p, x)} \: \psi(p) \, \eta(dp)
\end{equation*}
and $\eta(dp) =\frac {d^dp}{(2\pi)^d}$ is the (normalized) Haar
measure on the torus.

It easily can be checked that Fourier transform  \begin{equation*}
 \hat \Delta={\cF}\Delta \ {\cF}^{*}
 \end{equation*} of the Laplacian
$\Delta$ is the multiplication operator by the function
$\varepsilon(\cdot)$:
\begin{equation*}
(\hat \Delta f)(p)= {\varepsilon}(p)f(p),\quad f \in L^2(\T^d),
\end{equation*}
where
\begin{equation*}
\varepsilon(p)=\sum_{i=1}^{d}(1-\cos p^{(i)}),\quad p=(p^{(1)},...,
p^{(d)})\in \T^d.
\end{equation*}
The two-particle total Hamiltonian  $\mathrm{h}_\mu$ in the momentum
representation is given on $L^{2,s}[({\T}^d)^2]$ as follows
\begin{equation*}
\mathrm{h}_\mu = \mathrm{h}_0+ \mu v.
 \end{equation*}
Here the free Hamiltonian $\mathrm{h}_0$ is of the form
\begin{equation*}
\mathrm{h}_0 =\hat \Delta \otimes \hat I+ \hat I \otimes \hat
\Delta,
\end{equation*}
where $\hat I$ is the identity operator on $L^2(\T^d)$. It is easy
to see that the operator $\mathrm{h}_0$ is the multiplication
operator by the function $ \varepsilon (k_1)+\varepsilon (k_2):$
\begin{equation*}
(\mathrm{h}_0f)(k_1 ,k_2)=[\varepsilon(k_1)+\varepsilon (k_2 )]f(k_1
,k_2),\,\,f\in L^{2,s}[({\T}^d)^2]
\end{equation*}
and $v$ is the convolution type integral operator
\begin{align*}
&( vf)(k_1,k_2 )= {\int\limits_{({\T}^d)^2} }
 \delta (k_1
+k_2 -k_1'-k_2')f(k_1',k_2')\eta(dk_1')\eta(dk_2')\\
&=\int\limits_{\T^d} f(k_1',k_1 +k_2-k_1')\eta(dk_1'),\, f\in
L^{2,s}[({\T}^d)^2],
\end{align*}
where $ \delta (\cdot)$ is the $d-$ dimensional Dirac delta
function.

The three-particle Hamiltonian in the momentum representation is
given as bounded self-adjoint operator on the Hilbert space
 $L^{2,s}[({\T}^d)^3]$
\begin{equation*}
\mathrm{H}_\mu= \mathrm{H}_0+\mu({V}_{1}+V_{2}+V_{3}),
\end{equation*}
where  $\mathrm{H}_0$ is of the form
\begin{equation*}
\mathrm{H}_0=\hat \Delta\otimes \hat I\otimes\hat I+\hat I \otimes
 \hat \Delta \otimes \hat I+\hat I\otimes \hat I\otimes  \hat
 \Delta,
\end{equation*}
i.e., the free Hamiltonian $\mathrm{H}_0$ is the multiplication
operator by the function $\sum_{\alpha =1}^d \varepsilon
(k_\alpha)$:
\begin{equation*}
(\mathrm{H}_0f)(k_1,k_2,k_3)  = [\sum_{\alpha =1}^3 \varepsilon
(k_\alpha)]f(k_1,k_2,k_3),
\end{equation*}
and $V_{\alpha}=V,\alpha=1,2$  are convolution type integral
operators
\begin{align*}
&( V_{\alpha}f)(k_{\alpha},k_{\beta},k_{\gamma})\\
&={\int\limits_{({\T}^d)^3} } \delta (k_{\alpha} -k_{\alpha}')\,
\delta (k_{\beta} +k_{\gamma}
-k_{\alpha}'-k_{\beta}')f(k'_\alpha,k'_\beta,k'_\gamma)\eta(dk_{\alpha}')\eta(dk_{\beta}')\eta(dk_{\gamma}')\\
&={\int\limits_{{\T}^d}}f(k_{\alpha},k_{\beta}',k_{\beta}
+k_{\gamma}-k_{\beta}')\eta(dk_{\beta}'), \quad f\in
L^{2,s}[({\T}^d)^3].
\end{align*}

\section{Decomposition of the energy operators into von Neumann direct integrals.
Quasi-momentum and coordinate systems}

Let  $k=k_1+k_2\in \T^d$ resp. $K=k_1+k_2+k_3\in \T^d$ be the {\it
two-} resp. {\it three-particle quasi-momentum} and the set
$\mathbb{Q}_{k}$ resp. $\mathbb{Q}_{K}$ is defined as follows
\begin{align*}
&\mathbb{Q}_{k}=\{(k_1,k-k_1){\in }({\T}^d)^2: k_1 \in\T^d
k-k_1\in\T^d\}
\end{align*}
resp.
\begin{align*}
&\mathbb{Q}_{K}=\{(k_1,k_2,K-k_1-k_2){\in }({\T}^d)^3: k_1,k_2
\in\T^d, K-k_1-k_2\in\T^d\}.
\end{align*}
We introduce the mapping
\begin{equation*}
\pi_{1}:(\T^d)^2\to \T^d,\quad \pi_1(k_1, k_2)=k_1
\end{equation*}
resp.
\begin{equation*}
\pi_2:(\T^d)^3\to (\T^d)^2,\quad \pi_2(k_1, k_2, k_3)=(k_1, k_2).
\end{equation*}

Denote by  $\pi_k$, $k\in \T^d$ resp. $\pi_{K}$ , $K\in \T^d$ the
restriction of $\pi_1$ resp. $\pi_2$ onto $\Q_{k}\subset (\T^d)^2,$
resp. $\Q_{K}\subset (\T^d)^3$, that is,
\begin{equation*}\label{project}\pi_{k}= \pi_1\vert_{\Q_{k}}\quad
\text{and}\quad \pi_{K}=\pi_{2}\vert_{\Q_{K}}.
\end{equation*}
It is useful to remark that $ \Q_{k},\,\, k \in {\T}^d $ resp.
 $ \Q_{K},\,\, K \in
{\T}^d $ are the $d-$ resp. $2d-$ dimensional manifold isomorphic to
${\T}^d$ resp.
 ${({\T}^d)^2}$.

\begin{lemma}
The mapping   $\pi_{k}$, $k\in \T^d$ resp. $\pi_{K}$, $K\in \T^d$ is
bijective from $\mathbb{Q}_{k}\subset (\T^d)^2$ resp.
$\mathbb{Q}_{K}\subset (\T^d)^3$ onto  $\T^d$ resp. $(\T^d)^2$ with
the inverse mapping given by
\begin{equation*}
(\pi_{k})^{-1}(k_1)=(k_1,k-k_1)
\end{equation*}
resp.
\begin{equation*}
(\pi_{K})^{-1}(k_1,k_2)= (k_1, k_2, K-k_1-k_2).
\end{equation*}
\end{lemma}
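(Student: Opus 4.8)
The plan is to recognize the set $\Q_k$ as the \emph{graph} of a single-valued map and to use the elementary fact that the projection of a graph onto its domain is a bijection. Concretely, define $g_k\colon \T^d\to\T^d$ by $g_k(k_1)=k-k_1$, where the subtraction is carried out in the abelian group $\T^d=\R^d/(2\pi\Z^d)$. Then by definition
\begin{equation*}
\Q_k=\{(k_1,g_k(k_1)):k_1\in\T^d\}
\end{equation*}
is precisely the graph of $g_k$, and $\pi_k$ is the restriction to this graph of the first-coordinate projection $\pi_1(k_1,k_2)=k_1$.

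The only point requiring genuine attention is well-definedness: one must check that the candidate inverse actually takes values in $\Q_k$. This is where the group structure of the torus enters. Because $\T^d=\R^d/(2\pi\Z^d)$ is an abelian group, for every $k,k_1\in\T^d$ the element $k-k_1$ is a well-defined point of $\T^d$; hence $(k_1,k-k_1)$ indeed lies in $\Q_k$, so the formula $(\pi_k)^{-1}(k_1)=(k_1,k-k_1)$ defines a map $\T^d\to\Q_k$.

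It then remains to verify that this map is a two-sided inverse of $\pi_k$. First I would compute, for $k_1\in\T^d$,
\begin{equation*}
\pi_k\big((\pi_k)^{-1}(k_1)\big)=\pi_1(k_1,k-k_1)=k_1,
\end{equation*}
so $\pi_k\circ(\pi_k)^{-1}=\mathrm{id}_{\T^d}$, giving surjectivity of $\pi_k$. Conversely, any element of $\Q_k$ has the form $(k_1,k-k_1)$ for some $k_1\in\T^d$, so
\begin{equation*}
(\pi_k)^{-1}\big(\pi_k(k_1,k-k_1)\big)=(\pi_k)^{-1}(k_1)=(k_1,k-k_1),
\end{equation*}
which is $\mathrm{id}_{\Q_k}$ and yields injectivity. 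Together these show $\pi_k$ is a bijection with the stated inverse.

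The three-particle assertion is proved by the identical argument applied to the map $g_K\colon(\T^d)^2\to\T^d$, $g_K(k_1,k_2)=K-k_1-k_2$; here $\Q_K$ is the graph of $g_K$ over $(\T^d)^2$, the projection $\pi_2(k_1,k_2,k_3)=(k_1,k_2)$ restricts to $\pi_K$, and $K-k_1-k_2\in\T^d$ again by the group structure of the torus. I do not expect any real obstacle: the statement is essentially formal, and the only substantive observation is that the torus is a group, so that the differences $k-k_1$ and $K-k_1-k_2$ make sense as torus elements.
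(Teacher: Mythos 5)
Your proof is correct, and it takes the only natural route: the paper states this lemma without proof, treating it as immediate, and your observation that $\Q_k$ (resp.\ $\Q_K$) is the graph of $k_1\mapsto k-k_1$ (resp.\ $(k_1,k_2)\mapsto K-k_1-k_2$) over the torus group, together with the two-sided inverse check, is exactly the argument the authors implicitly rely on.
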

Let $L^{2,e}({\T}^d)\subset L^2 ( {\T}^d)$ be the subspace of even
functions. Decomposing the Hilbert space $ L^{2,s}[(\T^d)^2]$ resp.
$ L^{2,s}[({\T}^d)^3]$  into the direct integral
\begin{equation*}\label{tensor}
L^{2,s}[({\T}^d)^2] = \int_{k\in {\T}^d} \oplus
L^{2,e}({\T}^d)\eta(dk)
\end{equation*}
resp.
\begin{equation*}
L^{2,s}[({\T}^d)^3] = \int_{K\in {\T}^d} \oplus
L^{2,s}[({\T}^d)^2]\eta(dK)
\end{equation*}
yields the  decomposition of the Hamiltonian $\mathrm{h}_\mu$ resp.
$\mathrm{H}_\mu$ into the direct integral
\begin{equation*}\label{fiber2}
\mathrm{h}_\mu= \int\limits_{k \in \T^d}\oplus\tilde
h_\mu(k)\eta(dk)
\end{equation*}
resp.
\begin{equation*}\label{fiber3}
\mathrm{H}_\mu=\int\limits_ {K \in {\T}^d}\oplus \tilde
H_\mu(K)\eta(dK).
 \end{equation*}
\subsection{The discrete Schr\"odinger operators} The fiber operator $\tilde h_\mu(k),$
$k \in {\T}^d$ is unitarily equivalent to the operators $h_\mu(k),$
$k \in {\T}^d$ acting in $L^{2,e}({\T}^d)\subset L_2 ({\T}^d)$:
\begin{equation}\label{two} h_\mu(k) =h_0(k)+\mu v.
\end{equation}
The operator $h_0(k)$ is the multiplication operator by the function
$\cE_k(p)$:
\begin{equation*}(h_0(k)f)(p)=\cE_k(p)f(p),\quad f \in L^{2,e}(\T^d),
\end{equation*}
where
\begin{equation*}\label{E-alpha}
\cE_{k}(p)= \varepsilon (\frac{k}{2}-p) +\varepsilon
 (\frac{k}{2}+p)
\end{equation*}
and
\begin{equation*}
(vf)(p)= \int\limits_{{\T}^d}f(q)d \eta (q), \quad f \in
L^{2,e}({\T}^d).
\end{equation*}

The fiber operator $\tilde H_\mu(K),\ K \in {\T}^d$ is unitarily
equivalent to the operator $H_\mu (K),$ $K \in {\T}^d$ given by
\begin{equation*}\label{three-particle}
H_\mu(K)=H_0(K)+\mu (V_1+V_2+ V_3),V_\alpha=V,\ \alpha=1,2,3.
\end{equation*}
The operator $H_0(K),\ K \in {\T}^d$ acts in the Hilbert space
$L^{2,s}[({\T}^d)^2]$ and is of the form
\begin{equation*}\label{TotalK}
(H_0(K)f)(p,q)=E(K;p,q)f(p,q),\quad f\in  L^{2,s}[({\T}^d )^2],
\end{equation*}
where
\begin{equation*}\label{multyp_funct} E(K;p,q)= \varepsilon
(K-p-q)+\varepsilon(p) + \varepsilon (q).
\end{equation*}
The operator $\mathbb{V}=V_1+V_2+V_3$ acts in $L^{2,s}[({\T}^d)^2]$
and in the coordinates $(p,q)\in (\T^d)^2$ can be written as follows
\begin{align*}\label{potential}
(\mathbb{V}f)(p,q)&=\int\limits_{\T^d}f(p,t)
\eta(dt)+\int\limits_{\T^d}f(t,q)\eta(dt)+\int\limits_{\T^d}f(t,K-p-q)\eta(dt),f\in
L^{2,s}
[({\T}^d)^2].\\
\end{align*}
\section{Statement of the main results}

According to Weyl's theorem \cite{RSIV} the essential  spectrum
$\sigma_{\mathrm{essspec}}(h_\mu(k))$ of the operator $h_\mu(k),\,k
\in \T^d$ coincides with the spectrum $ {\sigma}( h_0 (k) ) $ of
$h_0(k).$ More precisely,
$$
\sigma_{\mathrm{essspec}}(h_\mu(k))= [\cE_{\min}(k)
,\,\cE_{\max}(k)],
$$
where \begin{align*} &\cE_{\min}(k)\equiv\min_{p\in
\T^d}\cE_k(p)=2\sum_{i=1}^{d}[1-\cos(\frac{k^{(i)}}{2})]\\
&\cE_{\max}(k)\equiv\max_{p\in\T^d}
\cE_{k}(p)=2\sum_{i=1}^{d}[1+\cos(\frac{k^{(i)}}{2})].
\end{align*}

The following Theorem states the existence of a unique eigenvalue of
the operator $h_\mu(k)$.

\begin{theorem}\label{existencetwo} For any $\mu<0$ and $k \in \T^d,\,d=1,2$\, the operator $h_\mu(k)$ has a
unique eigenvalue $e_\mu(k)$, which is even on $\T^d$ and satisfies
the relations $e_\mu(k)<\cE_{\min}(k),\,k\in\T^d$ and
$e_\mu(0)<e_\mu(k), k \ne0$. Moreover, for any $\mu<0$ the
eigenvalue $e_\mu(k)$ is regular function in $k\in\T^d$.

The associated eigenfunction $f_{\mu,e_\mu(k)}(\cdot),k \in \T^d$ is
a regular function on ${\T}^d$ and has the form
\begin{equation*}\label{eigen}
f_{\mu,e_\mu(k)}(\cdot)=\frac{\mu c(k)}{\cE_{k}(\cdot)-e_\mu(k)},
\end{equation*}
where $c(k)\neq 0,\,k \in \T^d$ is  a normalizing constant.
Moreover, the vector valued mapping
\begin{equation*}\label{map}
f_{\mu}:\mathbb{\T}^d \rightarrow
L^2[\mathbb{\T}^d,\eta(dk);L^{2,e}({\T}^d)],\,k\rightarrow
f_{\mu,e_\mu(k)}
\end{equation*} is regular on $\mathbb{\T}^d$.
\end{theorem}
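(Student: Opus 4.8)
The plan is to exploit that the interaction $v$ is a rank-one operator: since $(vf)(p)=\int_{\T^d}f(q)\eta(dq)$ sends every $f$ to the constant function equal to its mean, $v$ is the rank-one projection onto the constants. Hence for $z<\cE_{\min}(k)$ (so that $h_0(k)-z$ is boundedly invertible) the eigenvalue equation $h_\mu(k)f=zf$ reduces to a scalar condition. Indeed, writing $(\cE_k(\cdot)-z)f=-\mu\,vf=-\mu C$ with $C:=\int_{\T^d}f\,\eta(dp)$ forces
\begin{equation*}
f(p)=\frac{-\mu\,C}{\cE_k(p)-z},
\end{equation*}
and integrating this identity over $\T^d$ gives, for the only relevant case $C\neq0$, the equation
\begin{equation*}
g_k(z):=\int_{\T^d}\frac{\eta(dp)}{\cE_k(p)-z}=-\frac1\mu .
\end{equation*}
This already produces the asserted eigenfunction (with $c(k)=-C$), its evenness in $p$ (because $\cE_k(-p)=\cE_k(p)$), and its real-analyticity in $p$ (the denominator is bounded below by $\cE_{\min}(k)-z>0$).

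For existence and uniqueness I would analyze the scalar map $z\mapsto g_k(z)$ on $(-\infty,\cE_{\min}(k))$. It is strictly increasing, since $g_k'(z)=\int_{\T^d}(\cE_k(p)-z)^{-2}\eta(dp)>0$; it tends to $0$ as $z\to-\infty$; and—this is exactly where $d\le2$ enters—it diverges to $+\infty$ as $z\uparrow\cE_{\min}(k)$. The divergence follows from the local behavior $\cE_k(p)-\cE_{\min}(k)$ of order $|p|^2$ near the nondegenerate minimum at $p=0$, so that $g_k(z)$ behaves like $\int_{|p|<\delta}|p|^{-2}\,dp$, which diverges for $d=1,2$ (the degenerate boundary values $k^{(i)}=\pi$ reduce to a lower effective dimension or to the constant case $\cE_k\equiv\cE_{\min}(k)$ and diverge as well). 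By the intermediate value theorem the equation $g_k(z)=-1/\mu$ has, for every $\mu<0$, exactly one root $z=e_\mu(k)<\cE_{\min}(k)$, which is the unique eigenvalue (rank-one-ness excludes others). Evenness of $k\mapsto e_\mu(k)$ is immediate from $\cE_{-k}=\cE_k$ (evenness of $\varepsilon$), which gives $g_{-k}=g_k$.

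For the regularity claims I would apply the analytic implicit function theorem to $\Phi(k,z):=g_k(z)+1/\mu$. On the region $z<\cE_{\min}(k)$ the integrand is jointly real-analytic and its denominator is bounded away from zero, so $\Phi$ is real-analytic, while $\partial_z\Phi=g_k'(z)>0$ never vanishes; hence $e_\mu(k)$ is real-analytic. Periodicity across $k^{(i)}=\pi$ is not an obstruction: integrating out $p^{(i)}$ and using the shift $p^{(i)}\mapsto p^{(i)}+\pi$ shows $g_k$ depends on $k^{(i)}$ only through $\cos^2(k^{(i)}/2)=(1+\cos k^{(i)})/2$, a genuine trigonometric polynomial. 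The same joint analyticity, together with the nonvanishing analytic normalization $c(k)$ fixed by $\mu^2c(k)^2\,g_k'(e_\mu(k))=1$, yields that $k\mapsto f_{\mu,e_\mu(k)}$ is a regular $L^{2,e}(\T^d)$-valued map.

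The step I expect to be the main obstacle is the strict minimality $e_\mu(0)<e_\mu(k)$ for $k\neq0$. I would obtain it by implicit differentiation: $\partial_{k^{(j)}}e_\mu=-\partial_{k^{(j)}}g_k/g_k'$, so since $g_k'>0$ it suffices to show $\partial_{k^{(j)}}g_k<0$ for $k^{(j)}\in(0,\pi)$. Using $\partial_{k^{(j)}}\cE_k(p)=\sin(k^{(j)}/2)\cos p^{(j)}$, this reduces to the positivity of $\int_{\T^d}\cos(p^{(j)})(\cE_k(p)-z)^{-2}\eta(dp)$. Writing the $p^{(j)}$-dependence as $\cE_k(p)-z=B-2a\cos p^{(j)}$ with $a=\cos(k^{(j)}/2)>0$ and $B>2a$, the inner integral equals
\begin{equation*}
\int_{\T^d}\frac{\cos p^{(j)}}{(B-2a\cos p^{(j)})^2}\,\eta(dp^{(j)})
=\tfrac12\,\partial_a\,(B^2-4a^2)^{-1/2}=2a\,(B^2-4a^2)^{-3/2}>0,
\end{equation*}
and integrating over the remaining variables preserves the sign. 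Thus $e_\mu$ is strictly increasing in each $k^{(j)}$ on $(0,\pi)$, and combined with evenness in every coordinate this pins the global minimum at $k=0$.
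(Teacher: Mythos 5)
Your argument is correct and follows essentially the route the paper relies on: the paper gives no proof of Theorem \ref{existencetwo} beyond citing \cite{LXL}, but your reduction of the rank-one eigenvalue problem to the scalar equation $1+\mu\int_{\T^d}(\cE_k(q)-z)^{-1}\eta(dq)=0$ is exactly the determinant machinery ($\Delta_\mu$, Lemma \ref{nollar2}) that the paper itself deploys in Sections 5--6. Your monotonicity and threshold-divergence analysis of that equation for $d=1,2$, the implicit-function-theorem regularity (including the $\cos^2(k^{(i)}/2)$ periodicity check), and the explicit positivity computation giving $\partial_{k^{(j)}}e_\mu>0$ on $(0,\pi)$ are all sound, and in fact supply the details the paper leaves to the reference.
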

Theorem \ref{existencetwo} can be proven in the same way as
in\cite{LXL}.

We note that the spectrum $\sigma_{\mathrm{spec}}(H_0(K))$ of the
operator $H_0(K),\ K \in \T^d$ is the segment $[\mathrm{E}_{\min
}(K),\mathrm{E}_{\max}(K)]$, where
\begin{align*}
\mathrm{E}_{\min }(K)\equiv\min_{p,q\in \T^d}E(K;p,q),\
\mathrm{E}_{\max }(K)\equiv\max_{p,q \in \T^d}E(K;p,q).
\end{align*}

We describe the essential spectrum of the three-particle operator
$H_\mu(K),\,K \in \T^d$ by the spectrum of the non perturbed
operator $H_0(K),\ K \in \T^d$ and the discrete spectrum of the
two-particle operator $h_\mu(k),\,k \in \T^d$ in the following
theorem.

\begin{theorem}\label{essTheorem} For
any $\mu<0$ the essential spectrum $\sigma_{\mathrm{essspec}}(H_\mu
(K))$ of $H_\mu(K),\ K \in \T^d$ is described  as follows
$$
\sigma_{\mathrm{essspec}}(H_\mu (K))=\cup _{k\in
{\T}^d}\{e_\mu(k)+\varepsilon (K-k)\} \cup
[E_{\min}(K),E_{\max}(K)],
$$
where $e_\mu(k)$ is a unique eigenvalue of the operator
$h_\mu(k),k\in \T^d$.
\end{theorem}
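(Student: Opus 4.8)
\section{Proof proposal}

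The plan is to reduce the computation of $\sigma_{\mathrm{ess}}(H_\mu(K))$ to the spectra of three \emph{channel operators} and then to invoke an HVZ-type (Hunziker--van Winter--Zhislin) theorem. For $\alpha=1,2,3$ I introduce the channel operator
\begin{equation*}
H_\alpha(K)=H_0(K)+\mu V_\alpha,
\end{equation*}
in which only the $\alpha$-th pair interacts while the remaining particle is a free spectator. Since the three particles are identical bosons, the permutation unitaries on $L^{2,s}[(\T^d)^2]$ intertwine the $V_\alpha$, so the operators $H_\alpha(K)$ are mutually unitarily equivalent and $\bigcup_{\alpha=1}^3\sigma(H_\alpha(K))=\sigma(H_1(K))$.

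First I would compute the spectrum of a single channel operator. The operator $H_\alpha(K)$ commutes with multiplication by the spectator quasi-momentum, so it admits a direct integral decomposition over that variable. After an affine change of variables identifying the relative momentum of the interacting pair with the variable $p$ of Section~3, each fiber is unitarily equivalent to $\varepsilon(K-k)\,\hat I+h_\mu(k)$ acting in $L^{2,e}(\T^d)$, where $k$ is the pair quasi-momentum and $K-k$ the spectator momentum; this is exactly where the identity $\varepsilon(\tfrac{k}{2}-p)+\varepsilon(\tfrac{k}{2}+p)=\cE_k(p)$ is used. By Theorem~\ref{existencetwo} together with Weyl's theorem the fiber spectrum is $\varepsilon(K-k)+\big(\{e_\mu(k)\}\cup[\cE_{\min}(k),\cE_{\max}(k)]\big)$. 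Taking the union over $k\in\T^d$, and observing that the continuum parts $\varepsilon(K-k)+\cE_k(p)$ sweep out precisely the range $[E_{\min}(K),E_{\max}(K)]$ of $E(K;\cdot,\cdot)$, I obtain
\begin{equation*}
\sigma(H_\alpha(K))=\bigcup_{k\in\T^d}\{e_\mu(k)+\varepsilon(K-k)\}\cup[E_{\min}(K),E_{\max}(K)].
\end{equation*}
Because the eigenvalue branch $e_\mu(k)$ and the bands are nondegenerate intervals, this spectrum is purely essential, i.e.\ $\sigma(H_\alpha(K))=\sigma_{\mathrm{ess}}(H_\alpha(K))$, and it coincides with the set claimed in the theorem.

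It then remains to prove the HVZ identity $\sigma_{\mathrm{ess}}(H_\mu(K))=\bigcup_{\alpha}\sigma(H_\alpha(K))$. For the inclusion $\supseteq$ I would construct Weyl singular sequences: for a point $e_\mu(k)+\varepsilon(K-k)$ take the two-particle bound state $f_{\mu,e_\mu(k)}$ of the interacting pair tensored with a spreading wave packet of the spectator concentrating near momentum $K-k$, and for a point of $[E_{\min}(K),E_{\max}(K)]$ take a three-free-particle wave packet; symmetrizing keeps these in $L^{2,s}[(\T^d)^2]$, they converge weakly to zero and are asymptotic eigenvectors, so their energies lie in $\sigma_{\mathrm{ess}}(H_\mu(K))$. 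For the reverse inclusion $\subseteq$ I would use the Faddeev (Birman--Schwinger) reduction: writing $\mathbb V=\sum_\alpha V_\alpha$ and applying the resolvent identity, the difference between $(H_\mu(K)-z)^{-1}$ and the sum of the channel resolvents is controlled by an integral operator $T(K,z)$ whose kernel is built from $(H_0(K)-z)^{-1}$ and the pair resolvents. For $z\notin\bigcup_\alpha\sigma(H_\alpha(K))$ this kernel is continuous on the compact torus, so $T(K,z)$ is compact; analytic Fredholm theory then shows that $H_\mu(K)-z$ is Fredholm, whence $z\notin\sigma_{\mathrm{ess}}(H_\mu(K))$.

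The main obstacle I expect is this last compactness statement: one must show that the singularities of the Faddeev kernel occur exactly on the channel spectra and nowhere else. This is precisely where the rank-one (zero-range) structure of each $V_\alpha$ in its fiber and the regularity of $e_\mu(k)$ and of $f_{\mu,e_\mu(k)}$ guaranteed by Theorem~\ref{existencetwo} are indispensable, since they ensure that the kernel remains continuous off the two-cluster thresholds $\{e_\mu(k)+\varepsilon(K-k)\}$ and off the free continuum $[E_{\min}(K),E_{\max}(K)]$.
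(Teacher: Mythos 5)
Your proposal is correct and follows essentially the same route as the paper: reduce to the channel operator(s) $H_0(K)+\mu V_\alpha$ (the paper uses a single one, exploiting the boson symmetry exactly as you note), compute its spectrum by the direct--integral decomposition over the pair quasi-momentum $k$ with fiber $h_\mu(k)+\varepsilon(K-k)I$ and Theorem~\ref{existencetwo}, and then identify $\sigma_{\mathrm{ess}}(H_\mu(K))$ with the channel spectrum by an HVZ-type argument. The only difference is that the paper delegates this last identification to Theorem~3.2 of \cite{ALzM04}, whereas you sketch the standard Weyl-sequence and Faddeev--compactness proof of it.
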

The next theorem states the finiteness of the number of eigenvalues
for the Schr\"odinger operator $H_\mu(K)$ and the analyticity of the
eigenvalues and associated eigenfunctions.

Let $U_{\delta(K)}[p_\mu (K)]=\{K \in \T^d:|K-p_\mu (K)|<\delta\}$
be $\delta=\delta(K)-$ neighborhood of the point $p_\mu (K)\in
\T^d$.

\begin{theorem}\label{finite} Let $d=1,2$ and $\mu<0.$ Then
\begin{enumerate}
\item[(i)] There exists $\delta>0$ such that for each $K \in U_{\delta}[0]$
the operator $H_\mu(K)$ has  finite number of eigenvalues
$E_{1,\mu}(K),...,E_{n,\mu}(K)$ lying below the bottom of the
essential spectrum $\sigma_{\mathrm{essspec}}(H_{\mu}(K))$ with the
associated bound states
$$\psi_{\mu,E_{1,\mu}(K)}(\cdot),...,\psi_{\mu,E_{n,\mu}(K)}(\cdot)\in
L^{2,s}[({\T}^d)^2],\,K\in U_{\delta}[0].$$
\item[(ii)]The eigenfunction
$f_{\mu,E_\mu(K)}(\cdot,\cdot)\in L^{2,s}[({\T}^d)^2]$ of $H_\mu(K)$
associated to the eigenvalue $E_\mu(K), K\in U_{\delta}[0]$ is
regular in $(p,q)\in(\T^d)^2$.  Moreover, each eigenvalue
$E_\mu(K),K\in U_{\delta}[0]$ of $H_\mu(K)$ is a regular function in
$K\in U_{\delta}[0]$ and the vector valued mapping
\begin{equation*}\label{map}
f_{\mu}:U_{\delta}[0] \rightarrow
L^2[U_{\delta}[0],\eta(dK);L^{2,s}({\T}^d)^2],\,K\rightarrow
f_{\mu,E_\mu(K)}
\end{equation*} is also regular on $\mathbb{\T}^d$.
\end{enumerate}
\end{theorem}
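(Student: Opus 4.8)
The plan is to reduce the eigenvalue problem for $H_\mu(K)$ below the bottom of its essential spectrum to a Birman--Schwinger (Faddeev) equation on the one-particle space $L^2(\T^d)$ and then to count solutions via the spectral analysis of the resulting operator at the threshold. Write $H_\mu(K)=H_0(K)-|\mu|\mathbb{V}$ with $\mathbb{V}=V_1+V_2+V_3\ge0$, and set $\tau(K):=\min_{k\in\T^d}\{e_\mu(k)+\varepsilon(K-k)\}$, which by Theorem \ref{essTheorem} is the bottom of $\sigma_{\mathrm{essspec}}(H_\mu(K))$ and lies strictly below $E_{\min}(K)$, since $e_\mu(k)<\cE_{\min}(k)$. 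For $z<\tau(K)$ the function $E(K;\cdot,\cdot)-z$ is bounded below by a positive constant, so any eigenfunction must take the form $f(p,q)=|\mu|\,[g(p)+g(q)+g(K-p-q)]/(E(K;p,q)-z)$ for a single $g\in L^2(\T^d)$ (bosonic symmetry identifies the three channel components), where $g(p)=\int f(p,t)\,\eta(dt)$. Substituting yields the self-consistent equation $g=|\mu|\,\cL(K,z)g$; separating the diagonal term produces the scalar factor $\cD_\mu(K,z;p):=1-|\mu|\int_{\T^d}(E(K;p,t)-z)^{-1}\eta(dt)$, and a computation using the two-body eigenvalue relation for $h_\mu(K-p)$ shows that $\cD_\mu(K,z;p)=0$ exactly on the surface $z=e_\mu(K-p)+\varepsilon(p)$. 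Thus $z<\tau(K)$ is an eigenvalue of $H_\mu(K)$ if and only if $1$ is an eigenvalue of the self-adjoint, symmetrized operator
\[
\mathbf{T}(K,z) = \cD_\mu(K,z;\cdot)^{-1/2}\,\cK(K,z)\,\cD_\mu(K,z;\cdot)^{-1/2},
\]
where $\cK(K,z)$ is the bounded integral operator with kernel $|\mu|(E(K;p,t)-z)^{-1}$ acting after the symmetrization $g(t)+g(K-p-t)$. As $\mathbf{T}(K,z)$ is monotone and continuous in $z$ on $(-\infty,\tau(K))$, the Birman--Schwinger principle gives $N(K):=\#\{\text{eigenvalues of }H_\mu(K)\text{ below }\tau(K)\}=\lim_{z\uparrow\tau(K)}n(1,\mathbf{T}(K,z))$, with $n(1,\cdot)$ counting eigenvalues exceeding $1$.

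To prove (i) I would first show that for $K\in U_\delta[0]$ the function $\varphi_K(p):=e_\mu(K-p)+\varepsilon(p)$ attains its minimum $\tau(K)$ at a unique nondegenerate point $p_\mu(K)$ near $0$; at $K=0$ this is $p=0$ by Theorem \ref{existencetwo} (both $e_\mu$ and $\varepsilon$ are minimized there), and persistence for small $K$ follows from the implicit function theorem. Because $e_\mu(K-p)$ is separated from the two-body continuum $\cE_{\min}(K-p)$, the integral defining $\cD_\mu$ extends smoothly up to $z=\tau(K)$, so $\cD_\mu(K,\tau(K);\cdot)$ is a regular function vanishing only at $p_\mu(K)$, to second order:
\[
\cD_\mu(K,\tau(K);p) \asymp |p-p_\mu(K)|^2 .
\]
Consequently the threshold operator $\mathbf{T}(K,\tau(K))$ has kernel carrying the singular prefactor $|p-p_\mu(K)|^{-1}|t-p_\mu(K)|^{-1}$ against a smooth remainder. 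Granting that in dimensions $d=1,2$ the essential spectrum of $\mathbf{T}(K,\tau(K))$ is contained in $[0,\lambda_{\max}]$ with $\lambda_{\max}<1$, the operator $\mathbf{T}(K,\tau(K))$ has only finitely many eigenvalues above $1$, and since $\mathbf{T}(K,z)\uparrow\mathbf{T}(K,\tau(K))$ as $z\uparrow\tau(K)$, the count $N(K)$ is finite; this is (i).

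The essential-spectrum bound $\lambda_{\max}<1$ is the main obstacle. It requires localizing near $p_\mu(K)$, which is the only source of non-compactness (away from $p_\mu(K)$ the kernel is smooth and the operator is Hilbert--Schmidt, hence compact), and computing the essential spectrum of the model operator obtained by freezing coefficients at $p_\mu(K)$. The contrast with the three-dimensional Efimov case—where the analogous model has spectral radius reaching $1$ because of the $|p|^{-2}$ singularity and the zero-energy two-body resonance—is precisely that here the two-body subsystem has a genuine bound state with a spectral gap and $d\le2$, which forces the essential spectral radius strictly below $1$. I expect this to rest on an explicit diagonalization of the frozen model (via a dilation/Mellin transform adapted to the nondegenerate minimum) together with a monotonicity and scaling comparison.

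For (ii), regularity of the eigenfunction in $(p,q)$ is immediate from the representation $f_{\mu,E_\mu(K)}(p,q)=|\mu|[g(p)+g(q)+g(K-p-q)]/(E(K;p,q)-E_\mu(K))$: the denominator is real-analytic in $(p,q)$ and bounded below by $E_{\min}(K)-E_\mu(K)>0$, while $g$ solves $\cD_\mu(K,E_\mu(K);\cdot)\,g=\cK(K,E_\mu(K))g$ with $\cD_\mu(K,E_\mu(K);\cdot)$ bounded below (here $E_\mu(K)<\tau(K)$, so $\cD_\mu$ does not vanish), whence $g$ is regular on $\T^d$ and therefore so is $f$. For the regularity of $E_\mu(K)$ in $K$, I would invoke analytic Fredholm theory: the family $(K,z)\mapsto\mathbf{T}(K,z)$, equivalently the Fredholm determinant $\det\bigl(I-\mathbf{T}(K,z)\bigr)$, is regular on $\{z<\tau(K),\,K\in U_\delta[0]\}$, and each eigenvalue $E_\mu(K)$ is an isolated zero of $\det\bigl(I-\mathbf{T}(K,z)\bigr)=0$; by the implicit function theorem, together with Kato--Rellich perturbation theory to handle multiplicities, $E_\mu(K)$ is a regular function of $K$, and the associated spectral projection, hence the eigenfunction, depends regularly on $K$, giving the asserted regularity of the vector-valued map $K\mapsto f_{\mu,E_\mu(K)}$.
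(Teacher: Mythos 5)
Your overall strategy (reduction to a Birman--Schwinger equation on $L^2(\T^d)$, the quadratic vanishing $\Delta_\mu(K,\cdot\,;\tau)\asymp|p-p_\mu(K)|^2$ at a nondegenerate minimum, and the Fredholm-determinant/implicit-function argument for part (ii)) matches the paper's Lemmas \ref{Bir-Sch}, \ref{estimate} and \ref{main}. But the step that actually proves finiteness is missing. You reduce everything to the claim that the threshold operator $\mathbf{T}(K,\tau(K))$ has essential spectrum contained in $[0,\lambda_{\max}]$ with $\lambda_{\max}<1$, and you explicitly defer its proof (``Granting that\dots'', ``I expect this to rest on\dots''). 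That claim is not only unproven, it is mis-posed in the present setting: for $d=1,2$ the weight $\Delta_\mu^{-1/2}(K,\cdot\,;\tau)\sim|p-p_\mu(K)|^{-1}$ is \emph{not} square-integrable, and since the numerator $(E(K;p,q)-\tau)^{-1}$ is bounded away from zero near $(p_\mu(K),p_\mu(K))$, the quadratic form of $\mathbf{T}(K,\tau(K))$ contains a piece comparable to $\bigl|\int w(q)\,|q-p_\mu(K)|^{-1}\eta(dq)\bigr|^2$, which is $+\infty$ on suitable $w\in L^2(\T^d)$. So the threshold operator is not bounded, it has no essential spectrum to estimate, and the monotone limit $\mathbf{T}(K,z)\uparrow\mathbf{T}(K,\tau(K))$ does not land in the bounded operators. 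A Mellin/dilation analysis of a frozen model is the right tool in the $d=3$ Efimov situation, but it is not what closes the argument here.

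The paper's mechanism is different and is exactly what repairs this: in Lemma \ref{compact} one subtracts from the kernel the three ``boundary'' terms
$\frac{1}{E(K;p,p_\mu(K))-z}+\frac{1}{E(K;p_\mu(K),q)-z}-\frac{1}{E(K;p_\mu(K),p_\mu(K))-z}$,
producing a finite-rank operator $\mathrm{L}_\mu^{(1)}(K,z)$ (rank at most $3$, used only for $z<\tau$, where it is bounded, so its contribution to the counting function is bounded by its rank uniformly in $z$) plus a remainder $\mathrm{L}_\mu^{(2)}(K,z)$ whose numerator is a second mixed difference vanishing like $|p-p_\mu(K)|\,|q-p_\mu(K)|$; this cancels the non-$L^2$ singularity, so $\mathrm{L}_\mu^{(2)}$ is Hilbert--Schmidt \emph{up to and including} $z=\tau(K)$ and norm-continuous there. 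Weyl's inequality
$n_+[\lambda_1+\lambda_2,A_1+A_2]\le n_+[\lambda_1,A_1]+n_+[\lambda_2,A_2]$
then gives a bound on $n_+[1,\mathrm{L}_\mu(K,z)]$ uniform in $z<\tau(K)$, and Lemma \ref{Bir-Sch} transfers this to $H_\mu(K)$. In short: the singular part of the threshold operator must be isolated as a finite-rank (form-unbounded) piece and counted by its rank, not estimated by an essential spectral radius; without that decomposition your proof of part (i) does not go through. Part (ii) of your proposal is essentially the paper's argument and is fine.
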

\begin{corollary} The two-particle Hamiltonian $\mathrm{h}_\mu$ has
a unique isolated band spectrum and the three-particle Hamiltonian
$\mathrm{H}_\mu$ have a finite number band spectrum.
\end{corollary}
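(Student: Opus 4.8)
The plan is to read off both assertions directly from the von Neumann direct integral decompositions of Section 3 combined with Theorems \ref{existencetwo} and \ref{finite}. Recall that $\mathrm{h}_\mu$ is unitarily equivalent to $\int_{\T^d}\oplus\, h_\mu(k)\,\eta(dk)$ and $\mathrm{H}_\mu$ to $\int_{\T^d}\oplus\, H_\mu(K)\,\eta(dK)$, so by the standard theory of fibered self-adjoint operators
\begin{equation*}
\sigma_{\mathrm{spec}}(\mathrm{h}_\mu)=\overline{\bigcup_{k\in\T^d}\sigma_{\mathrm{spec}}(h_\mu(k))},\qquad \sigma_{\mathrm{spec}}(\mathrm{H}_\mu)=\overline{\bigcup_{K\in\T^d}\sigma_{\mathrm{spec}}(H_\mu(K))}.
\end{equation*}
For the two-particle claim I would invoke Theorem \ref{existencetwo}: for every $k$ the fiber $h_\mu(k)$ has \emph{exactly one} eigenvalue $e_\mu(k)$, lying strictly below the continuous part $[\cE_{\min}(k),\cE_{\max}(k)]$, and $k\mapsto e_\mu(k)$ is regular on the torus with minimum $e_\mu(0)$. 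Hence the discrete part of the fibered spectrum is the single regular branch $\{e_\mu(k):k\in\T^d\}$, i.e.\ exactly one band.

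To see that this band is \emph{isolated} I would exhibit a uniform spectral gap separating it from the essential branch of every fiber. The map $k\mapsto \cE_{\min}(k)-e_\mu(k)$ is continuous and strictly positive on the compact torus $\T^d$, so
\begin{equation*}
\delta_0:=\min_{k\in\T^d}\bigl(\cE_{\min}(k)-e_\mu(k)\bigr)>0,
\end{equation*}
which shows the eigenvalue branch is separated from the continuous branch of every fiber by at least $\delta_0$; this is precisely the assertion that $\mathrm{h}_\mu$ possesses a unique isolated band.

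For the three-particle claim the mechanism is identical, but the input is Theorem \ref{finite}, which guarantees that for each relevant $K$ the operator $H_\mu(K)$ has only finitely many eigenvalues $E_{1,\mu}(K),\dots,E_{n,\mu}(K)$ below $\sigma_{\mathrm{essspec}}(H_\mu(K))$, and that these eigenvalues and their eigenfunctions depend regularly on $K$. Each regular branch $K\mapsto E_{j,\mu}(K)$ contributes one band to $\sigma_{\mathrm{spec}}(\mathrm{H}_\mu)$, so it remains to bound the number of such branches. I would do this by feeding the local count of Theorem \ref{finite} into a compactness/continuation argument: regularity of the branches forbids bands from appearing or disappearing except by merging into the essential spectrum, and persistence of the gap below $\sigma_{\mathrm{essspec}}(H_\mu(K))$ keeps the number of isolated eigenvalues locally constant, so a finite cover of the compact parameter region yields a finite total number of bands.

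The main obstacle I expect is exactly this globalization step: Theorem \ref{finite} is stated only on the neighborhood $U_\delta[0]$, whereas the corollary concerns the full operator $\mathrm{H}_\mu$. To close the gap I would either (i) show that the finiteness argument of Theorem \ref{finite} is uniform in $K$ over the whole torus, so that the number of eigenvalues below the essential spectrum is bounded independently of $K$, or (ii) use the regularity of the eigenvalue branches together with persistence of the spectral gap to patch the local counts into a single global finite count. Establishing a genuinely \emph{uniform} gap between the lowest eigenvalues and $\sigma_{\mathrm{essspec}}(H_\mu(K))$—rather than merely a pointwise one—is the delicate point, since near the boundary of $\T^d$ an eigenvalue branch could in principle be absorbed into the continuous spectrum, and one must rule this out to guarantee that the bands remain isolated and finite in number.
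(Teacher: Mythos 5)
Your route is the one the paper intends: the corollary is stated without any proof, as an immediate consequence of the direct integral decompositions of Section 3 together with Theorems \ref{existencetwo} and \ref{finite}. Your treatment of the two-particle assertion is complete and correct: Theorem \ref{existencetwo} supplies exactly one regular eigenvalue branch $e_\mu(k)<\cE_{\min}(k)$ for every $k\in\T^d$, and compactness of the torus converts the pointwise gap $\cE_{\min}(k)-e_\mu(k)>0$ into a uniform one, so $\mathrm{h}_\mu$ has a unique isolated band.

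For the three-particle assertion you have put your finger on the genuine difficulty, but you have not resolved it --- and neither does the paper. Theorem \ref{finite} asserts finiteness of the number of eigenvalues of $H_\mu(K)$ only for $K$ in the neighborhood $U_{\delta}[0]$ of the origin; the restriction enters through Lemmas \ref{estimate} and \ref{compact}, whose arguments require $K\in U_\delta(0)$ so that the minimum point $p_\mu(K)$ of $Z_\mu(K,\cdot)$ is non-degenerate and the Birman--Schwinger operator splits into a finite-rank piece plus a Hilbert--Schmidt piece. The band spectrum of $\mathrm{H}_\mu=\int_{\T^d}\oplus H_\mu(K)\,\eta(dK)$ is governed by the fibers over \emph{all} of $\T^d$. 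Your two proposed repairs (a $K$-uniform bound on the eigenvalue count, or a continuation argument along regular branches with a persistent gap) are the right ideas, but as written they remain programmatic: a family of fibers each with finitely many eigenvalues, even with regular branches, can still produce infinitely many bands unless the count is bounded uniformly in $K$, and nothing established in the paper rules out new branches emerging from the essential spectrum as $K$ leaves $U_\delta[0]$. So your proposal matches the paper's (implicit) argument, and is in fact more candid than the paper about the one step that genuinely needs work; to make the corollary rigorous as stated one must either extend Theorem \ref{finite} to all $K\in\T^d$ or restrict the three-particle claim to the fibers over $U_\delta[0]$.
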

Denote by $\tau_{\spec}(H_{\mu}(K))$ resp.
$\tau_{\mathrm{essspec}}(H_{\mu}(K))$ the bottom of the spectrum
resp. essential spectrum of the three-particle Schr\"odinger
operator $H_\mu(K),\ K\in \T^d$,\, i.e.,
\begin{equation*}\label{bottomSpec}
\tau_{\mathrm{spec}}(H_{\mu}(K))= \inf_{||f||=1}(H_\mu(K)f,f).
\end{equation*}
resp.
\begin{equation}\label{bottomEssspec}
\tau_{\mathrm{essspec}}(H_{\mu}(K))=\inf\sigma_{\mathrm{essspec}}(H_\mu
(K)).
\end{equation}
Let
\begin{equation*}\label{esstwo}
\sigma_{\mathrm{esstwo}}(H_\mu (K))=\cup _{k\in
{\T}^d}\{e_\mu(k)+\varepsilon (K-k)\}.
\end{equation*}
resp.
\begin{equation*}\label{essthree}
\sigma_{\mathrm{essthree}}(H_{\mu}(K))= [E_{\min}(K),E_{\max}(K)]
\end{equation*}
 be the two-particle resp. three-particle
essential spectrum of $H_\mu (K),\ K\in \T^d$ and
\begin{equation*}\label{bottomess}
\tau_{\mathrm{esstwo}}(H_{\mu}(K))=\inf\sigma_{\mathrm{esstwo}}(H_\mu
(K))
\end{equation*}
resp.
\begin{equation*}\label{botthreepart}
\tau_{\mathrm{essthree}}(H_{\mu}(K))=
E_{\min}(K)=\inf_{||f||=1}[(H_0(K)f,f)]
\end{equation*}
be the bottom of the two-particle resp.\ three-particle essential
spectrum.
\begin{remark}
For the operator $H_{\mu}(K)$ associated to a system of three bosons
on the lattice $\Z^d,d=1,2$ Theorems \ref{existencetwo} and
\ref{essTheorem} give
\begin{equation*}\label{d=1ord=2}
\sigma_{\mathrm{esstwo}}(H_\mu (K))\neq\emptyset
\end{equation*}
and
\begin{equation*}
\tau_{\mathrm{esstwo}}(H_{\mu}(K))<
\tau_{\mathrm{essthree}}(H_{\mu}(K))
\end{equation*}
and hence
\begin{equation*}
\sigma_{\mathrm{essthree}}(H_{\mu}(K))\subset\sigma_{\mathrm{essspec}}(H_{\mu}(K)).
\end{equation*}
Consequently, the inequality
\begin{equation*}\label{d=1or d= 2}
\tau_{\mathrm{essspec}}(H_{\mu}(K))<
\tau_{\mathrm{essthree}}(H_{\mu}(K))
\end{equation*}
holds, which allows to prove the finiteness of the number of bound
states of three interacting bosons on the lattice $\Z^d,d=1,2$.
\end{remark}

\begin{remark} We remark that for the three-particle Schr\"odinger
operator $H_{\mu}(K)$, associated to a system of three bosons in the
three-dimensional lattice $\Z^3$, there exists $\mu_0>0$ such that
\begin{equation*}\label{Efimov}
\sigma_{\mathrm{essspec}}(H_{\mu_0}(0))=\sigma_{\mathrm{essthree}}(H_{\mu_0}(0))
\end{equation*}
and hence
\begin{equation*}\label{d=1or d= 2}
\tau_{\mathrm{essspec}}(H_{\mu_0}(0))=
\tau_{\mathrm{essthree}}(H_{\mu_0}(0)).
\end{equation*}
At the same time for any nonzero $K\in\T^3$ the following relation
\begin{equation*}\label{d=1ord=2}
\sigma_{\mathrm{esstwo}}(H_{\mu_0}(K))\neq\emptyset
\end{equation*}
holds and hence
\begin{equation*}
\tau_{\mathrm{esstwo}}(H_{\mu_0}(K))<
\tau_{\mathrm{essthree}}(H_{\mu_0}(K))
\end{equation*}
and
\begin{equation*}
\sigma_{\mathrm{essthree}}(H_{\mu_0}(K))\subset\sigma_{\mathrm{essspec}}(H_{\mu_0}(K)).
\end{equation*}
Thus only the operator $H_{\mu_0}(0)$ may have an infinite number of
eigenvalues below the bottom of the three-particle continuum
(Efimov's effect)\cite{ALzM04,Lak93}, which yields the existence of
an infinite number of bound states.
\end{remark}

\section{The essential spectrum of the operator $H_\mu(K)$.}
Since we are considering the system of identical particles, there is
only one channel operator
 $ H_{\mu,ch}(K), K{\ \in } \T^d,\,d=1,2$ defined in the
Hilbert space $L^{2,s}[({\T}^d )^2]= L^2({\T}^d)\otimes L^{2,e}
({\T}^d)$ as
\begin{equation*}
 H_{\mu,ch}(K)=H_0(K)+\mu V.
\end{equation*}
The operators $H_0(K)$ and $V=V_\alpha$ act as follows
\begin{equation*}\label{TotalK}
(H_0(K)f)(p,q)=\cE (K;p,q)f(p,q),\quad f\in L^{2,s}[({\T}^d )^2],
\end{equation*}
where
\begin{equation*}\label{Eps}
\cE(K;p,q)= \varepsilon (K-p)+\varepsilon  (\frac{p}{2}-q) +
\varepsilon (\frac{p}{2}+q)
\end{equation*}
and
 \begin{equation*}\label{Poten}(V
f)(p,q)= \int\limits_{\T^d}f(p,t)\eta(dt),\quad f\in L^{2,s}[({\T}^d
)^2].
\end{equation*}

The decomposition of the space $L^{2,s}[(\T^d)^2]$ into the direct
integral

 $$L^{2,s}[({\T}^d )^2]= \int\limits_{k\in \T^d}
\oplus L^{2,e}( \T^d)\eta(dk)
$$
yields for the operator $H_{\mu,ch}(K)$ the decomposition
 $$H_{\mu,ch}(K)=\int\limits_{k\in \T^d}
 \oplus h_{\mu}(K,k) \eta(dk).$$
The fiber operator $h_{\mu}(K,k)$ acts in the Hilbert space
$L^{2,e}(\T^d)$ and has the form
\begin{equation}\label{representation}
 h_{\mu}(K,k) =h_{\mu}
(k)+\varepsilon(K-k) I,
\end{equation} where $I=I_{L^{2,e}(\T^d)}$ is
the identity operator and $h_{\mu}(k)$ is the two-particle operator
defined by \eqref{two}. The representation \eqref{representation} of
the operator $h_{\mu}(K,k)$ and Theorem \ref{existencetwo} yield for
the spectrum of operator $h_{\mu}(K,k)$ the equality
\begin{align}\label{stucture}
 &\sigma (h_{\mu}(K,k))
 = Z_\mu(K,k)\cup \big[\cE_{\text{min}}(k),\cE_{\text{max}}(k) \big],
\end{align}
 where
\begin{equation*}\label{ZK}
Z_\mu(K,k)=e_\mu(k)+\varepsilon(K-k)
\end{equation*} and $e_\mu(k)$ is
the unique eigenvalue of the operator $h_{\mu}(k)$.

The spectrum of the channel operator $H_{\mu,ch}(K),\ K\in \T^d$ is
described in the following
\begin{lemma}\label{relation} The
equality holds
\begin{equation*}\label{structur}
\sigma (H_{\mu,ch}(K))=\cup _{k\in \T^d} \left \{Z_\mu(K,k) \right\}
\cup [E_{\min}(K),E_{\max}(K)].
\end{equation*}
\end{lemma}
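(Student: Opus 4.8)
The plan is to compute $\sigma(H_{\mu,ch}(K))$ directly from the fibers of the direct integral $H_{\mu,ch}(K)=\int_{k\in\T^d}^{\oplus} h_\mu(K,k)\,\eta(dk)$, invoking the standard description of the spectrum of a direct integral of self-adjoint operators \cite{RSIV}: a point $\lambda$ belongs to $\sigma(H_{\mu,ch}(K))$ if and only if, for every $\epsilon>0$, the set $\{k\in\T^d:\sigma(h_\mu(K,k))\cap(\lambda-\epsilon,\lambda+\epsilon)\neq\emptyset\}$ has positive Haar measure. The measurability of the family $k\mapsto h_\mu(K,k)$ needed to apply this result is clear from \eqref{representation}, since each fiber is multiplication by the jointly continuous function $\cE_k(q)+\varepsilon(K-k)$ plus the $k$-independent rank-one operator $\mu v$.

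First I would record the fiber spectra. By \eqref{representation} the operator $h_\mu(K,k)$ is the two-particle operator $h_\mu(k)$ translated by the scalar $\varepsilon(K-k)$, so Theorem \ref{existencetwo} together with Weyl's theorem gives
\[
\sigma(h_\mu(K,k))=\{Z_\mu(K,k)\}\cup\big[\cE_{\min}(k)+\varepsilon(K-k),\ \cE_{\max}(k)+\varepsilon(K-k)\big],\qquad Z_\mu(K,k)=e_\mu(k)+\varepsilon(K-k).
\]
The decisive feature is that every function entering here is continuous on the compact connected torus $\T^d$: $e_\mu(\cdot)$ by the regularity asserted in Theorem \ref{existencetwo}, and $\cE_{\min}(\cdot),\cE_{\max}(\cdot),\varepsilon(\cdot)$ by inspection.

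Next I would assemble the union of the fiber spectra. Continuity of $k\mapsto Z_\mu(K,k)$ on $\T^d$ makes $\bigcup_{k}\{Z_\mu(K,k)\}$ a compact interval. For the band part, observe that $\cE_k(q)+\varepsilon(K-k)$ is precisely the dispersion relation $\cE(K;k,q)$ of $H_0(K)$ in the channel coordinates; for fixed $k$ it sweeps $[\cE_{\min}(k)+\varepsilon(K-k),\cE_{\max}(k)+\varepsilon(K-k)]$ as $q$ runs over $\T^d$, and letting $(k,q)$ run over $(\T^d)^2$ the continuous function $\cE(K;\cdot,\cdot)$ fills exactly its range $[E_{\min}(K),E_{\max}(K)]$. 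Hence $\bigcup_{k\in\T^d}\sigma(h_\mu(K,k))=\bigcup_{k\in\T^d}\{Z_\mu(K,k)\}\cup[E_{\min}(K),E_{\max}(K)]$, and as a union of two compact intervals this set is closed.

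Finally I would check the two inclusions against the measure-theoretic criterion. If $\lambda$ is an interior point of the branch $\bigcup_k\{Z_\mu(K,k)\}$ or of a band, continuity yields a nonempty open set of $k$ (hence of positive Haar measure) on which $\sigma(h_\mu(K,k))$ already contains a point of $(\lambda-\epsilon,\lambda+\epsilon)$, so $\lambda\in\sigma(H_{\mu,ch}(K))$; since such interior points are dense in the union and $\sigma(H_{\mu,ch}(K))$ is closed, the whole union is contained in $\sigma(H_{\mu,ch}(K))$. Conversely, a $\lambda$ lying outside the compact union above has positive distance from that set, hence from every fiber spectrum, so the exceptional set is empty and $\lambda\notin\sigma(H_{\mu,ch}(K))$. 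I expect the only genuine work to be the middle step: recognizing $\cE_k(q)+\varepsilon(K-k)$ as the channel-coordinate form of $E(K;\cdot,\cdot)$ and using connectedness of $(\T^d)^2$ to collapse the union of shifted bands into the single interval $[E_{\min}(K),E_{\max}(K)]$; the direct-integral spectrum theorem is then applied essentially verbatim.
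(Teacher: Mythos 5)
Your proof is correct and follows essentially the same route as the paper: the paper's own proof of Lemma \ref{relation} is a one-line appeal to the theorem on the spectrum of a decomposable operator together with the fiber spectra \eqref{stucture}, and you simply carry out that argument in full detail (measurability, assembly of the union, and the two inclusions via the positive-measure criterion). Incidentally, your version of the fiber spectrum correctly carries the shift $\varepsilon(K-k)$ into the band part, which the paper's displayed formula \eqref{stucture} omits --- apparently a typo, since Lemma \ref{relation} itself requires the shifted bands to sweep out $[E_{\min}(K),E_{\max}(K)]$.
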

\begin{proof}
The theorem (see, e.g.,\cite{RSIV}) on the spectrum of decomposable
operator and the structure \eqref{stucture} of the spectrum of
$h_{\mu}(K,k)$ give the proof.
\end{proof}
The essential spectrum of $H_\mu (K),\ K\in \T^d$ is described in
the following
\begin{theorem}\label{equality}The equality
$$\sigma _{\mathrm{essspec}}(H_\mu(K))=\sigma (H_{\mu,ch}(K))$$ holds.
\end{theorem}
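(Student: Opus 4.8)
The plan is to prove the two inclusions separately, using the Bose symmetry of the problem together with a Birman--Schwinger factorization of the interaction. Each interaction term factors as $V_\alpha=J_\alpha^*J_\alpha$, where $J_\alpha\colon L^{2,s}[(\T^d)^2]\to L^2(\T^d)$ traces out one momentum variable; explicitly $(J_1f)(p)=\int_{\T^d}f(p,t)\eta(dt)$, while $J_2,J_3$ are obtained from $J_1$ by the two remaining particle permutations, so that $A^*A=\mathbb{V}$ for $Af=(J_1f,J_2f,J_3f)$. In particular the three single--pair channel operators $H_0(K)+\mu V_\alpha$ are pairwise unitarily equivalent and each has spectrum $\sigma(H_{\mu,ch}(K))$ by Lemma \ref{relation}. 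Since $[E_{\min}(K),E_{\max}(K)]=\sigma(H_0(K))\subseteq\sigma(H_{\mu,ch}(K))$, it suffices to analyse $z$ outside this segment.

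For the inclusion $\sigma(H_{\mu,ch}(K))\subseteq\sigma_{\mathrm{essspec}}(H_\mu(K))$ I would invoke Weyl's singular--sequence criterion. For $\lambda\in[E_{\min}(K),E_{\max}(K)]$ choose $(p_0,q_0)$ with $E(K;p_0,q_0)=\lambda$ and take $f_n$ supported in shrinking neighbourhoods of $(p_0,q_0)$ but oscillating so rapidly that $\|f_n\|=1$, $f_n\rightharpoonup 0$ and $(H_0(K)-\lambda)f_n\to 0$; since each $J_\alpha$ is a partial integration, the Riemann--Lebesgue lemma yields $V_\alpha f_n\to 0$, hence $(H_\mu(K)-\lambda)f_n\to 0$. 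For a point $\lambda=e_\mu(k_0)+\varepsilon(K-k_0)$ in the two--particle branch I would build the singular sequence from the genuine bound pair: using \eqref{representation}, tensor the eigenfunction $f_{\mu,e_\mu(k_0)}$ of $h_\mu(k_0)$ from Theorem \ref{existencetwo} with a rapidly oscillating profile in the free--particle variable concentrated at $k_0$. This sequence is asymptotically null for $H_{\mu,ch}(K)-\lambda$, while the two remaining interactions $V_\beta+V_\gamma$ vanish in the limit because they integrate against the oscillating factor.

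For the reverse inclusion $\sigma_{\mathrm{essspec}}(H_\mu(K))\subseteq\sigma(H_{\mu,ch}(K))$ I would use a Fredholm reduction. For $z\notin[E_{\min}(K),E_{\max}(K)]$ the operator $H_0(K)-z$ is invertible, and
\begin{equation*}
H_\mu(K)-z=(H_0(K)-z)\bigl(I+\mu(H_0(K)-z)^{-1}A^*A\bigr),
\end{equation*}
so that $H_\mu(K)-z$ is Fredholm if and only if $I+\mu A(H_0(K)-z)^{-1}A^*$ is Fredholm on $\bigl(L^2(\T^d)\bigr)^3$. This last operator is $I+\mu M(z)$ with the block matrix $M(z)=[J_\alpha(H_0(K)-z)^{-1}J_\beta^*]_{\alpha,\beta}$. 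The decisive point is that the off--diagonal blocks ($\alpha\neq\beta$) are compact: because $J_\alpha$ and $J_\beta$ trace out different variables, their composition integrates the resolvent kernel $[E(K;p,q)-z]^{-1}$ over a full torus variable and leaves a jointly continuous kernel on $\T^d\times\T^d$, hence a Hilbert--Schmidt operator. Thus $I+\mu M(z)$ is Fredholm precisely when its diagonal part is, i.e.\ when each single--channel operator $I+\mu J_\alpha(H_0(K)-z)^{-1}J_\alpha^*$ is Fredholm. Each such block is a multiplication operator by $1+\mu\int_{\T^d}[E(K;\cdot,t)-z]^{-1}\eta(dt)$, whose non--Fredholmness is exactly the dispersion relation defining the branch $\cup_k Z_\mu(K,k)$; by the boson symmetry the three blocks coincide up to unitary equivalence, so invertibility holds precisely for $z\notin\sigma(H_{\mu,ch}(K))$.

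The main obstacle is the compactness of the cross--channel blocks $J_\alpha(H_0(K)-z)^{-1}J_\beta^*$ with $\alpha\neq\beta$: one must check that, after integrating $[E(K;p,q)-z]^{-1}$ in the traced--out variable, the resulting kernel is genuinely continuous on $\T^d\times\T^d$ for every $z$ in the resolvent set of $H_{\mu,ch}(K)$, and control it uniformly as $z$ approaches the boundary of $\sigma(H_{\mu,ch}(K))$. Once this Hilbert--Schmidt estimate is secured, combining the Fredholm equivalence with the singular--sequence inclusion yields $\sigma_{\mathrm{essspec}}(H_\mu(K))=\sigma(H_{\mu,ch}(K))$.
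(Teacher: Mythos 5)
Your outline is correct and is essentially the argument the paper relies on: the paper proves Theorem \ref{equality} only by citing Theorem 3.2 of \cite{ALzM04}, whose proof follows exactly this two-inclusion scheme (Weyl singular sequences built from the three-particle band and from the bound pair of $h_\mu(k)$ for one inclusion, and a Faddeev/Birman--Schwinger reduction in which the cross-channel terms are integral operators with continuous kernels, hence Hilbert--Schmidt, while the diagonal term is multiplication by $\Delta_\mu(K,\cdot\,;z)$ for the other). The only superfluous worry is the uniform control of the Hilbert--Schmidt norm as $z$ approaches the boundary of $\sigma(H_{\mu,ch}(K))$: since one only needs Fredholmness at each fixed $z\notin\sigma(H_{\mu,ch}(K))$, where $E(K;p,q)-z$ is bounded away from zero, no such uniformity is required.
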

\begin{proof} Theorem \ref{equality} can be proven by the same way as
Theorem 3.2
 in \cite{ALzM04}.
\end{proof}
Theorems \ref{existencetwo} and \ref{equality} yield that the bottom
$\tau_{\mathrm{essspec}}(H_{\mu}(K))$ of the essential spectrum of
the operator $H_\mu (K)$ less than the bottom
$\tau_{\mathrm{spec}}(H_0(K))=\tau_{\mathrm{essthree}}(H_{\mu}(K))$
of the spectrum of the non-perturbed operator $H_0(K)$, which is
attribute for the three-particle Schr\"odinger operators on the
lattice $\Z^d$ and Euclid space $\R^d$ in dimensions $d=1,2$.

\begin{lemma}\label{inequality}  For any $\mu<0$ and $K\in \T^d,d=1,2$
the bottom of the essential spectrum of $H_{\mu}(K)$ satisfies the
relations
$$\tau_{\mathrm{essspec}}(H_{\mu}(K))<
\tau_{\mathrm{essthree}}(H_{\mu}(K))=\tau_{\mathrm{spec}}(H_0(K))=E_{\min}
(K)$$ holds, where $\tau_{\mathrm{essspec}}(H_{\mu}(K))$ is defined
in \eqref{bottomEssspec}.
\end{lemma}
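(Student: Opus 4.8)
The plan is to separate the claim into the routine equalities on the right and the substantive strict inequality on the left. For the equalities I would simply recall that $H_0(K)$ is the multiplication operator by the continuous function $E(K;\cdot,\cdot)$ on the compact torus $(\T^d)^2$; hence its spectrum is the range $[E_{\min}(K),E_{\max}(K)]$ and $\tau_{\mathrm{spec}}(H_0(K))=\inf_{\|f\|=1}(H_0(K)f,f)=E_{\min}(K)$. Since this last quantity is by definition $\tau_{\mathrm{essthree}}(H_{\mu}(K))$, the chain $\tau_{\mathrm{essthree}}(H_{\mu}(K))=\tau_{\mathrm{spec}}(H_0(K))=E_{\min}(K)$ is immediate and carries no content.

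For the strict inequality I would invoke the description of the essential spectrum furnished by Theorem \ref{essTheorem} (equivalently Theorem \ref{equality} together with Lemma \ref{relation}), namely
\begin{equation*}
\sigma_{\mathrm{essspec}}(H_\mu(K))=\sigma_{\mathrm{esstwo}}(H_\mu(K))\cup[E_{\min}(K),E_{\max}(K)].
\end{equation*}
Passing to infima gives $\tau_{\mathrm{essspec}}(H_\mu(K))=\min\{\tau_{\mathrm{esstwo}}(H_\mu(K)),E_{\min}(K)\}$, so it suffices to establish the single estimate $\tau_{\mathrm{esstwo}}(H_\mu(K))<E_{\min}(K)$.

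The key step is to rewrite $E_{\min}(K)$ in the channel coordinates. Using the dispersion relation of $H_0(K)$ in the form $\cE(K;p,q)=\varepsilon(K-p)+\cE_p(q)$, with $p$ playing the role of the pair quasi-momentum and $q$ the internal variable, and minimizing first in $q$, I obtain
\begin{equation*}
E_{\min}(K)=\min_{k\in\T^d}\big[\varepsilon(K-k)+\cE_{\min}(k)\big].
\end{equation*}
Let $k_*\in\T^d$ be a minimizer, which exists by continuity and compactness. Theorem \ref{existencetwo} supplies the strict bound $e_\mu(k_*)<\cE_{\min}(k_*)$, whence
\begin{equation*}
\tau_{\mathrm{esstwo}}(H_\mu(K))\le e_\mu(k_*)+\varepsilon(K-k_*)<\cE_{\min}(k_*)+\varepsilon(K-k_*)=E_{\min}(K).
\end{equation*}
Combining this with the previous display yields $\tau_{\mathrm{essspec}}(H_\mu(K))=\tau_{\mathrm{esstwo}}(H_\mu(K))<E_{\min}(K)=\tau_{\mathrm{essthree}}(H_\mu(K))$, as required.

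I do not anticipate a serious obstacle. The only inequality with genuine content is $e_\mu(k)<\cE_{\min}(k)$, which is already provided by Theorem \ref{existencetwo} and reflects the fact that the attractive coupling $\mu<0$ pulls a true two-particle bound state strictly below the two-particle continuum for every fixed pair quasi-momentum. The one point meriting care is the reduction $E_{\min}(K)=\min_k[\varepsilon(K-k)+\cE_{\min}(k)]$, which relies on correctly identifying the channel coordinates so that the three-particle dispersion factorizes as the third-particle kinetic term plus the pair dispersion $\cE_p(q)$; once this factorization is in place, evaluating the two-particle branch at the minimizer $k_*$ delivers the strict gap.
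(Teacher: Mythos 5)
Your proposal is correct and follows essentially the same route as the paper: both reduce everything to the strict two-particle bound $e_\mu(k)<\cE_{\min}(k)$ supplied by Theorem \ref{existencetwo} and then evaluate the two-particle branch $Z_\mu(K,k)=e_\mu(k)+\varepsilon(K-k)$ of the essential spectrum at a point where the free channel dispersion attains $E_{\min}(K)$. The only difference is that the paper picks the explicit point $k=2K/3$ and uses the identity $E_{\min}(K)=3\varepsilon(K/3)$, whereas you work with an abstract minimizer $k_*$ of $\varepsilon(K-k)+\cE_{\min}(k)$, which spares you that explicit computation.
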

\begin{proof} Theorem \ref{existencetwo} yields that for any
$k\in \T^d$ the operator $h_\mu(k)$ has a unique eigenvalue
$e_\mu(k)<2\, \varepsilon(\frac{k}{2})=\cE_{\min}(k).$

Hence
$$
Z_{\mu}(K,k)\Big|_{k=\frac{2K}{3}}= e_{\mu}(\frac{2K}{3})+
\varepsilon(\frac{K}{3})<
2\varepsilon(\frac{K}{3})+\varepsilon(\frac{K}{3})=
3\varepsilon(\frac{K}{3})= E_{\min}(K).
$$
The definition of $\tau_{\mathrm{essspec}}(H_{\mu}(K))$ gives
\begin{align*}
&\tau_{\mathrm{essspec}}(H_{\mu}(K))=
\tau_{\mathrm{spec}}(H^{ch}_{\mu}(K))\\
&=\inf\sigma(H^{ch}_\mu(K))=\inf_{k\in\T^d} Z_{\mu}(K,k) \leq
e_{\mu}(\frac{2K}{3})+
\varepsilon(\frac{K}{3})<3\varepsilon(\frac{K}{3}),
\end{align*}
which proves Lemma \ref{inequality}.
\end{proof}
\section{Proof of the main results}

For any $K,k\in \T^d,d=1,2$ the essential  spectrum
   $\sigma_{\mathrm{essspec}}(h_\mu(K,k))$ of the
operator $h_\mu(K,k),\,K,k \in \T^d$ coincides with the spectrum $
{\sigma}( h_0 (K,k) ) $ of $h_0(K,k).$ More precisely,
$$
\sigma_{\mathrm{essspec}}(h_\mu(K,k))= [\cE_{\min}(K,k)
,\,\cE_{\max}(K,k)].
$$
\begin{align*}
&E_{\min}(K,k)=\min_{q}E(K,k\,;q)=\min_{q}\cE_k(q)+\varepsilon(K-k)=2\varepsilon(\frac{k}{2})+\varepsilon(K-k)\\
&E_{\max}(K,k)=\max_{q}E(K,k\,;q)=\max_{q}\cE_k(q)+\varepsilon(K-k)=[2d-\varepsilon(\frac{k}{2})]+\varepsilon(K-k).
\end{align*}
The determinant $\Delta_\mu (K,k\,;z),\ K,k\in \T^d,d=1,2$
associated to the operator $h_{\mu}(K,k)$ can be defined as an
regular function in $\mathrm{C}\setminus [E_{\min }(K,k),\,E_{\max
}(K,k)]$ as
\begin{equation*}\label{determinant}
\Delta_\mu (K,k\,;z)= 1+\mu \int\limits_{\T^d}\frac{\eta(dk)}{E
(K,k\,;q)-z}.
\end{equation*}
Let $\mathrm{L}_\mu( K,z),\,\, K\in \T^d,\,\, z <
\tau_{\mathrm{essspec}}(H_{\mu}(K))$ be a self-adjoint operator
defined in  $L^{2}(\T^d)$ as
\begin{equation*}\label{compact_operator}
[\mathrm{L}_\mu(K,z)w](p)=-\mu
 \int\limits_{\T^d} \frac{\Delta^{-\frac{1}{2}}_{\mu}(K,p,z)
\Delta_{\mu}^{-\frac{1}{2}}(K,q,z)}{E(K;p,q)-z}w(q)
\eta(dq),\\
w\in L_ 2(\T^d).
\end{equation*}
The operator $\mathrm{L}_\mu(K,z)$ is a lattice analogue of the
Birman-Schwinger operator that has been introduced in \cite{Lak93}
to investigate Efimov's effect for the three-particle lattice
Schr\"{o}dinger operator $H_\mu(K)$.

For a bounded self--adjoint operator $A$ in a Hilbert space
${{\mathcal{H}}}$
and for each $\gamma \in \mathbb{R}$ we define the number $n_{+}[\gamma, A]$ resp. $%
n_{-}[\gamma, A]$ as
\begin{align*}
&n_{+}[\gamma ,A]:\\
&=\max \{\dim{\mathcal{H}}^+_A(\gamma):{\mathcal
{H}}^+_A(\gamma)\subset {{\mathcal{H}}}\text{ subspace with
}\left\langle A\varphi ,\varphi \right\rangle >\gamma ,\text{
}\varphi \in {\mathcal{H}}^+_A(\gamma),\text{ }||\varphi ||=1\}
\end{align*}%
resp.
\begin{align*}
&n_{-}[\gamma ,A]:\\
&=\max \{\dim{\mathcal{H}}^-_A(\gamma):{\mathcal
{H}}^-_A(\gamma)\subset {{\mathcal{H}}}\text{ subspace with
}\left\langle A\varphi ,\varphi \right\rangle <\gamma ,\text{
}\varphi \in {\mathcal {H}}^-_A(\gamma),\text{ }||\varphi ||=1\}.
\end{align*}%
If some point of the essential spectrum of \ $A$ is greater resp.
smaller than $\gamma$, then $n_{+}[\gamma, A]$ resp.
$n_{-}[\gamma,A]$ is equal to infinity. If $n_{+}[\gamma,A]$ resp.
$n_{-}[\gamma,A]$ is finite, then it is equal to the number of the
eigenvalues (counting multiplicities) of $A$, which are greater
resp. smaller than $\gamma $ (see, for instance, Glazman lemma
\cite{Pankov A}).
\begin{remark}
Theorem \ref{essTheorem} yields that for any $K\in \T^d$ the
operator $H_\mu(K)$ has no essential spectrum below
$\tau_{\mathrm{essspec}}(H_{\mu}(K))$.
\end{remark}

\begin{lemma}\label{Bir-Sch}(The
Birman-Schwinger principle). For each $\mu<0,\ K\in \T^d$ and
$z<\tau_{\mathrm{essspec}}(H_{\mu}(K))$ the operator
$\mathrm{L}_\mu(K,z)$ is compact and the equality
$$
n_-[z,H_\mu(K)]=n_+[1,\mathrm{L}_\mu(K,z)].
$$ holds. Moreover for any $\mu<0,\ K\in \T^d$ the operator $\mathrm{L}_\mu(K,z)$ is
continuous in $z\in (-\infty,\tau_{\mathrm{essspec}}(H_{\mu}(K)))$.
\end{lemma}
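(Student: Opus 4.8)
The plan is to realise $\mathrm{L}_{\mu}(K,z)$ as the symmetrised Faddeev reduction of the Birman--Schwinger operator and to read off the count from the abstract Birman--Schwinger principle, so I first record the positivity facts that everything rests on. By Lemma~\ref{inequality}, $z<\tau_{\mathrm{essspec}}(H_{\mu}(K))<E_{\min}(K)$, whence $E(K;p,q)-z\ge E_{\min}(K)-z>0$ uniformly on $(\T^d)^2$ and $(H_0(K)-z)^{-1}$ is a bounded strictly positive multiplication operator. By Theorem~\ref{existencetwo} and Lemma~\ref{relation} the zeros of $z\mapsto\Delta_{\mu}(K,k;z)$ are exactly the two--particle branch values $e_{\mu}(k)+\varepsilon(K-k)$, and their infimum over $k$ is $\tau_{\mathrm{essspec}}(H_{\mu}(K))$; hence for $z<\tau_{\mathrm{essspec}}(H_{\mu}(K))$ the symbol $\Delta_{\mu}(K,\cdot\,;z)$ is continuous and bounded below by a positive constant on $\T^d$, equivalently $-\mu\int_{\T^d}(E(K;p,q)-z)^{-1}\eta(dq)<1$ for all $p$. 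In particular $\Delta_{\mu}^{-1/2}(K,\cdot\,;z)$ is a bounded continuous positive factor.

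Writing $B=-\mu\mathbb{V}\ge0$ and $\mathbb{K}(z)=B^{1/2}(H_0(K)-z)^{-1}B^{1/2}$, the map $z\mapsto(H_0(K)-z)^{-1}$ is operator--monotone increasing for $z<E_{\min}(K)$, so $\mathbb{K}(z)$ is positive and monotone increasing; the Birman--Schwinger principle (Glazman's lemma) then gives
\[
 n_-[z,H_{\mu}(K)]=n_+[1,\mathbb{K}(z)].
\]
The operator $\mathbb{K}(z)$ is bounded but need not be compact: since each $V_{\alpha}$ is a rank--one projection in one of the two pair variables, the diagonal blocks of $\mathbb{K}(z)$ are multiplications by the symbol $-\mu\int_{\T^d}(E-z)^{-1}\eta=1-\Delta_{\mu}$, whose range lies strictly below $1$, while the off--diagonal blocks have the bounded kernel $-\mu(E(K;p,q)-z)^{-1}$ on the compact torus and are Hilbert--Schmidt. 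By Weyl's theorem $\sigma_{\mathrm{ess}}(\mathbb{K}(z))$ is contained in the range of $1-\Delta_{\mu}(K,\cdot\,;z)$, hence in $(-\infty,1)$, so $n_+[1,\mathbb{K}(z)]$ counts only discrete eigenvalues exceeding $1$.

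The decisive step is the identity $n_+[1,\mathbb{K}(z)]=n_+[1,\mathrm{L}_{\mu}(K,z)]$. Because $\sigma_{\mathrm{ess}}(\mathbb{K}(z))<1$, the eigenvalues at and above $1$ are captured by a Schur complement of the non--compact diagonal: solving the intra--pair block explicitly is precisely the resummation that produces the factor $\Delta_{\mu}$, and for bosons, where the three channels collapse by symmetry, the eigenvector equation $\mathbb{K}(z)\Phi=\Phi$ becomes the scalar equation
\[
 \Delta_{\mu}(K,p;z)\,\varphi(p)=-\mu\int_{\T^d}\frac{\varphi(q)+\varphi(K-p-q)}{E(K;p,q)-z}\,\eta(dq),\qquad \varphi\in L^2(\T^d).
\]
Using that $E(K;p,q)$ is invariant under $q\mapsto K-p-q$ and substituting $w=\Delta_{\mu}^{1/2}(K,\cdot\,;z)\,\varphi$ converts this into $w=\mathrm{L}_{\mu}(K,z)w$. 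The maps $\psi\mapsto\varphi\mapsto w$ and the explicit reconstruction of $\psi$ from $\varphi$ are mutually inverse linear bijections of the corresponding eigenspaces, so multiplicities match, and the same reconstruction at any value above $1$ yields the equality of counts. Compactness of $\mathrm{L}_{\mu}(K,z)$ is then immediate: its kernel $-\mu\,\Delta_{\mu}^{-1/2}(K,p,z)\Delta_{\mu}^{-1/2}(K,q,z)(E(K;p,q)-z)^{-1}$ is bounded and continuous on the compact $(\T^d)^2$, hence square--integrable, so $\mathrm{L}_{\mu}(K,z)$ is Hilbert--Schmidt.

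Continuity in $z$ I would read off the kernel directly: on any compact subinterval of $(-\infty,\tau_{\mathrm{essspec}}(H_{\mu}(K)))$ the function $z\mapsto(E(K;p,q)-z)^{-1}$ is Lipschitz uniformly in $(p,q)$ and $z\mapsto\Delta_{\mu}^{-1/2}(K,\cdot\,;z)$ is continuous in sup--norm, so the difference of kernels is small in $L^2((\T^d)^2)$ and bounds $\|\mathrm{L}_{\mu}(K,z)-\mathrm{L}_{\mu}(K,z')\|$ by the corresponding Hilbert--Schmidt norm, giving norm continuity on $(-\infty,\tau_{\mathrm{essspec}}(H_{\mu}(K)))$. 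I expect the main obstacle to be the reduction of the third paragraph: one must justify the Schur complement rigorously --- this is exactly where $\Delta_{\mu}>0$, guaranteed in the first paragraph, is essential, since it ensures the non--compact diagonal contributes nothing to the count above $1$ --- and must track the bosonic symmetry that collapses the three--channel Faddeev matrix to the single scalar kernel of $\mathrm{L}_{\mu}(K,z)$, including the correct numerical normalisation.
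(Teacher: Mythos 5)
Your overall route is the paper's route: reduce $n_-[z,H_\mu(K)]$ to the Birman--Schwinger count $n_+[1,\mathbb{K}(z)]$ via Glazman's lemma and the $AB$/$BA$ trick, then absorb the diagonal (intra-channel) part of $\mathbb{K}(z)$ into the determinant $\Delta_\mu$ and conjugate by $\Delta_\mu^{-1/2}$ to obtain $\mathrm{L}_\mu(K,z)$; your positivity preliminaries, the compactness of $\mathrm{L}_\mu(K,z)$ via its continuous kernel, and the norm continuity in $z$ all match what the paper uses (and your explicit identification of the diagonal block as multiplication by $1-\Delta_\mu$ and the off-diagonal blocks as Hilbert--Schmidt is actually cleaner than the paper's presentation).

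The genuine gap is in your decisive third step. The Schur-complement/eigenvector correspondence you set up identifies the eigenspace of $\mathbb{K}(z)$ at the eigenvalue \emph{exactly} $1$ with the eigenspace of $\mathrm{L}_\mu(K,z)$ at $1$ (this is the content of Lemma~\ref{main}(i)--(ii) of the paper), but $n_+[1,\cdot]$ counts spectrum \emph{strictly above} $1$, and the Feshbach/Schur reduction does not preserve that part of the spectrum: solving $\mathbb{K}(z)\Phi=\lambda\Phi$ for $\lambda>1$ leads, after the same resummation, to an eigenvalue-$1$ equation for the $\lambda$-dependent operator with weight $(\lambda-1+\Delta_\mu)^{-1/2}$ in place of $\Delta_\mu^{-1/2}$, not to $\mathrm{L}_\mu(K,z)w=\lambda w$. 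So your sentence ``the same reconstruction at any value above $1$ yields the equality of counts'' does not hold as stated. The paper closes this by running the comparison at the level of quadratic forms rather than eigenvectors: from $(u,u)<-3\mu(V^{1/2}R_0(K,z)V^{1/2}u,u)$ it moves the diagonal term to the left to get $\bigl(\,[I+\mu V^{1/2}R_0(K,z)V^{1/2}]u,u\bigr)<-2\mu(V^{1/2}R_0(K,z)V^{1/2}u,u)$, and since $I+\mu V^{1/2}R_0V^{1/2}$ is positive and invertible (multiplication by $\Delta_\mu>0$), the substitution $y=(I+\mu V^{1/2}R_0V^{1/2})^{1/2}u$ is a bijection turning this into $(y,y)<(\mathrm{L}_\mu(K,z)y,y)$; a bijective correspondence of the subspaces defining $n_+$ gives the equality of the counting functions directly, with no need to track individual eigenvalues above $1$. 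Alternatively one can keep your eigenvalue-$1$ correspondence and supplement it with a continuity/monotonicity argument in $z$, but some such additional ingredient is required.
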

\begin{proof} We first verify the equality
\begin{equation}\label{tenglik}
n_-[z,H_\mu(K)]=n_+[1,-3\mu R^{\frac{1}{2}}_0(K,z)
VR^{\frac{1}{2}}_0(K,z)].
\end{equation}
 Assume that $u \in
{\cH}^-_{H_\mu(K)}(z)\subset L^{2,s}[(\T^d)^2]$, that is,
$((H_0(K)-z)u,u) < -3\mu( Vu,u).$ Then
$$(y,y) <(-3\mu R^{\frac{1}{2}}_0(K,z)V R^{\frac{1}{2}}_0(K,z)y,y),
\quad y=R^{\frac{1}{2}}_0(K,z)u,
$$
where $R_0(K,z)$ is the resolvent of the $H_0(K)$.Hence
$$n_-[z,H_\mu(K)] \leq
n_+[1,-3 \mu R^{\frac{1}{2}}_0(K,z)V R^{\frac{1}{2}}_0(K,z)].$$
Reversing the argument we get the opposite inequality, which proves
\eqref{tenglik}.

Note that any nonzero  eigenvalue of
$R^{\frac{1}{2}}_0(K,z)V^{\frac{1}{2}}$ is an eigenvalue for
$V^{\frac{1}{2}}R^{\frac{1}{2}}_0(K,z)$ as well, of the same
algebraic and geometric multiplicities. Therefore  we get
$$
n_+[1,-3\mu R^{\frac{1}{2}}_0(K,z) V R^{\frac{1}{2}}_0(K,z)]=
n_+[1,-3\mu V^{\frac{1}{2}} R_0(K,z)V^{\frac{1}{2}}].
$$
Let us check the equality
$$
n_+[1,-3\mu V^{\frac{1}{2}}
R_0(K,z)V^{\frac{1}{2}}]=n_+[1,\mathrm{L}_\mu(K,z)].
$$
We show that for any $$u \in {\cH}_{-3\mu V^{\frac{1}{2}}
R_0(K,z)V^{\frac{1}{2}}}^+(1)$$ there exists
$y\in{\cH}_{\mathrm{L}_\mu(K,z)}^+(1)$ such that
$(y,y)<(\mathrm{L}_\mu (K,z) y,y).$

Let $u\in {\cH}_{-3\mu
V^{\frac{1}{2}}R_0(K,z)V^{\frac{1}{2}}}^+(1).$ Then
$$
(u,u)< -3\mu(V^{\frac{1}{2}} R_0(K,z)V^{\frac{1}{2}} u,u)
$$
and
\begin{equation}\label{coordinate}
([I+\mu V^{\frac{1}{2}} R_0(K,z) V^{\frac{1}{2}
}]u,u)<-2\mu(V^{\frac{1}{2} }R_0(K,z)V^{\frac{1}{2}}u,u).
\end{equation}

Since $z<\tau_{\mathrm{essspec}}(H_{\mu}(K))$ the operator $I+\mu
V^{\frac{1}{2}}R_0(K,z)V^{\frac{1}{2}}$ is invertible and positive
the operator $W^{\frac{1}{2}}_\mu(K,z)=(I+\mu
V^{\frac{1}{2}}R_0(K,z)V ^{\frac{1}{2}})^{-\frac{1}{2}}$ exists.
Setting $$y=(I+\mu V^{\frac{1}{2}}R_0(K,z)V
^{\frac{1}{2}})^{\frac{1}{2}}u $$ gives us
$$
(y,y)< -2\mu(W^{\frac{1}{2}}_\mu(K,z)V^{\frac{1}{2}} R_0(K,z)
V^{\frac{1}{2}} W^{\frac{1}{2}}_\mu(K,z)y,y).
$$
Since $W^{\frac{1}{2}}_\mu(K,z)$ is the multiplication operator by
the function $\Delta^{-\frac12}_\mu(K,p\:;z)$ the inequalities $$
(y,y)\leq (\mathrm{L}_\mu(K,z)y,y) $$ and  $$ n_+[1,-3\mu
R^{\frac{1}{2}}_0(K,z)V R^{\frac{1}{2}}_0(K,z)] \leq
n_+(1,\mathrm{L}_\mu(K,z))$$ hold. By the same way one  checks that
$$ n_+(1,\mathrm{L}_\mu(K,z)) \leq n_+(1,-3\mu
R^{\frac{1}{2}}_0(K,z)V R^{\frac{1}{2}}_0(K,z)).$$
\end{proof}
The following lemma gives the well known relation between the
eigenvalues of $h_{\mu}(K,k)$ and zeros of the determinant
$\Delta_\mu (K,k\,;z)$ \cite{ALzM04}.
\begin{lemma}\label{nollar2}
For all   $K,k\in\T^d$ the number  $z \in  {\mathrm{C}} {\setminus}
[E_{\min }(K,k),\,E_{\max}(K,k)]$ is an eigenvalue of the operator
$h_{\mu}(K,k) $ if and only if
$$
 \Delta_\mu (K, k\,; z) = 0.
$$
 \end{lemma}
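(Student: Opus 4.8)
The plan is to exploit the rank-one structure of the perturbation $\mu v$ and collapse the eigenvalue equation into the single scalar (secular) equation $\Delta_\mu(K,k;z)=0$. By the representation \eqref{representation} together with \eqref{two}, the operator $h_\mu(K,k)$ acts on $L^{2,e}(\T^d)$ as
\begin{equation*}
(h_\mu(K,k)f)(q)=E(K,k;q)f(q)+\mu\int\limits_{\T^d}f(p)\,\eta(dp),
\end{equation*}
that is, as multiplication by $E(K,k;\cdot)=\cE_k(\cdot)+\varepsilon(K-k)$ plus $\mu$ times the rank-one operator $v$ sending $f$ to the constant $(vf)=\int_{\T^d}f\,\eta$. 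For $z\notin[E_{\min}(K,k),E_{\max}(K,k)]$, which is precisely the range of $E(K,k;\cdot)$ and hence $\sigma_{\mathrm{essspec}}(h_\mu(K,k))$, the function $E(K,k;q)-z$ never vanishes and stays bounded away from $0$, so the multiplication operator by $E(K,k;\cdot)-z$ is boundedly invertible.

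First I would prove the forward implication. Suppose $h_\mu(K,k)f=zf$ with $f\neq0$, and set $C:=\int_{\T^d}f(p)\,\eta(dp)$. The eigenvalue equation then reads $(E(K,k;q)-z)f(q)=-\mu C$. If $C=0$ then $f$ vanishes identically, since $E(K,k;\cdot)-z$ is nowhere zero, contradicting $f\neq0$; hence $C\neq0$. Dividing gives $f(q)=-\mu C\,(E(K,k;q)-z)^{-1}$, and integrating this identity against $\eta$ and cancelling $C\neq0$ yields
\begin{equation*}
1+\mu\int\limits_{\T^d}\frac{\eta(dq)}{E(K,k;q)-z}=0,
\end{equation*}
which is exactly the vanishing of the determinant $\Delta_\mu(K,k;z)$ defined above.

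For the converse, assume $\Delta_\mu(K,k;z)=0$ and set $f(q):=(E(K,k;q)-z)^{-1}$. Because $z$ lies at strictly positive distance from the compact range $[E_{\min}(K,k),E_{\max}(K,k)]$, the function $f$ is bounded and, since $E(K,k;\cdot)$ is even in $q$, even; hence $f\in L^{2,e}(\T^d)$ and $f\neq0$. Using $\Delta_\mu(K,k;z)=0$ in the form $\mu\int_{\T^d}(E(K,k;p)-z)^{-1}\eta(dp)=-1$, a direct substitution gives
\begin{equation*}
(h_\mu(K,k)f)(q)=\frac{E(K,k;q)}{E(K,k;q)-z}-1=\frac{z}{E(K,k;q)-z}=zf(q),
\end{equation*}
so $z$ is an eigenvalue. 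I expect no genuine obstacle here: the entire content is the observation that $v$ has rank one, which reduces the spectral problem to a scalar equation, and the only two points requiring attention — that $C\neq0$ in the forward direction and that the candidate eigenfunction is square-integrable — both follow at once from $z$ being strictly separated from the essential spectrum.
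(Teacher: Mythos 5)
Your proof is correct and is exactly the standard argument the paper has in mind: the paper itself omits the proof, remarking only that it ``is usual and can be found in \cite{LXL}'', and that usual proof is precisely your rank-one reduction of the eigenvalue equation to the scalar secular equation $\Delta_\mu(K,k;z)=0$. Both directions are handled cleanly, including the two points that actually need checking (that $\int_{\T^d} f\,\eta\neq 0$ in the forward direction, and that the candidate eigenfunction $(E(K,k;\cdot)-z)^{-1}$ is bounded, even, and hence lies in $L^{2,e}(\T^d)$ in the converse).
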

The proof of Lemma \ref{nollar2} is usual and can be found
\cite{LXL}.

\begin{lemma}\label{main}
The following assertions ~(i)--(iv) hold true.
\begin{enumerate}
\item[(i)] If $f\in L^{2,s}[({\T}^d)^2]$ solves $H_
\mu(K)f = zf$, $z<\tau_{\mathrm{essspec}}(H_{\mu}(K))$ then
$$\psi(p)=\Delta^{\frac12}_\mu(K,p\:;z)\varphi(p),\, \mbox{where}
\,\,\varphi(p)=\int\limits_{\T^d}f(p,t)\eta(dt)\in L^2({\T}^d)$$
solves $\psi = \mathrm{L_\mu}(k,z)\psi$.

\item[(ii)] If $\psi \in L^2({\T}^d)$ solves $\psi =
\mathrm{L_\mu}(k,z)\psi$, then
\begin{equation*}
f(p,q)=\dfrac{\mu[\varphi(p)+\varphi(q)+\varphi(K-p-q)]}{E(K;p,q)-z}\in
L^{2,s}[({\T}^d)^2],
\end{equation*}
where $\varphi(p)=\Delta^{-\frac12}_\mu(K,p\:;z)\psi(p)$, solves the
equation $H_\mu(K)f = zf$.
\item[(iii)] For any $\mu<0$ the eigenvalue $E_\mu(K)<\tau_{\mathrm{essspec}}(H_{\mu}(K))$ of the operator
$H_\mu(K)$ and the associated eigenfunction $f\in
L^{2,s}[({\T}^d)^2]$ are regular in $K\in \T^d$.
\end{enumerate}
\end{lemma}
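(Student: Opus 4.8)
The plan is to read (i) and (ii) as the two directions of a Birman--Schwinger correspondence and to reserve analytic perturbation theory for (iii). Throughout I use that, for $z<\tau_{\mathrm{essspec}}(H_\mu(K))$, Lemma~\ref{inequality} gives $z<E_{\min}(K)=\min_{p,q}E(K;p,q)$, so the kernel $E(K;p,q)-z\ge E_{\min}(K)-z>0$ is bounded away from zero, $R_0(K,z)=(H_0(K)-z)^{-1}$ is bounded, and $\Delta_\mu(K,p;z)=1+\mu\int_{\T^d}\frac{\eta(dq)}{E(K;p,q)-z}$ is a regular, strictly positive function of $p$ that is bounded and bounded away from $0$ (positivity because $z$ lies below the two--particle branch $Z_\mu(K,p)$, where $\Delta_\mu$ has its zero); hence $\Delta_\mu^{\pm1/2}$ act as bounded multiplication operators. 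For (i): from $H_\mu(K)f=zf$ I invert $H_0(K)-z$ to write
\[
f(p,q)=-\mu\,\frac{\varphi(p)+\varphi(q)+\varphi(K-p-q)}{E(K;p,q)-z},\qquad \varphi(p):=\int_{\T^d}f(p,t)\eta(dt),
\]
the three summands being the images of the three terms of $\mathbb{V}$ after using the $S_3$--symmetry of $f\in L^{2,s}[(\T^d)^2]$. Integrating this identity in $q$, and using that $E(K;p,q)-z$ is invariant under $q\mapsto K-p-q$ and under $p\leftrightarrow q$, I collect the diagonal contribution $\mu\varphi(p)\int\frac{\eta(dq)}{E(K;p,q)-z}$ into the factor $\Delta_\mu(K,p;z)$, obtaining a closed integral equation for $\varphi$ with kernel $1/(E(K;p,q)-z)$. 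The substitution $\psi=\Delta_\mu^{1/2}\varphi$ then symmetrises this kernel into $\Delta_\mu^{-1/2}(K,p;z)\Delta_\mu^{-1/2}(K,q;z)/(E(K;p,q)-z)$ and turns the equation into $\psi=\mathrm L_\mu(K,z)\psi$; here $\varphi\in L^2(\T^d)$ because partial integration in $q$ is a contraction on $L^2$ (as $\eta$ is a probability measure) and $\psi\in L^2(\T^d)$ because $\Delta_\mu^{1/2}$ is bounded.

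For (ii) I run this chain backwards. Given $\psi=\mathrm L_\mu(K,z)\psi$, I set $\varphi=\Delta_\mu^{-1/2}\psi\in L^2(\T^d)$ and define $f$ by the same formula; square--integrability is immediate since the denominator is bounded below and $\varphi\in L^2$, and symmetry $f\in L^{2,s}[(\T^d)^2]$ is visible because both the numerator $\varphi(p)+\varphi(q)+\varphi(K-p-q)$ and the denominator $E(K;p,q)-z$ are invariant under all permutations of $(p,q,K-p-q)$. The one substantive check is the self--consistency $\int_{\T^d}f(p,t)\eta(dt)=\varphi(p)$: substituting the formula for $f$ and undoing the symmetrisation reproduces exactly $\psi=\mathrm L_\mu(K,z)\psi$, after which $(H_0(K)-z)f=-\mu\mathbb{V}f$, i.e.\ $H_\mu(K)f=zf$, holds by construction.

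For (iii) I would characterise the eigenvalues $z=E_\mu(K)<\tau_{\mathrm{essspec}}(H_\mu(K))$ as the values at which $1$ is an eigenvalue of the compact operator $\mathrm L_\mu(K,z)$ (Lemma~\ref{Bir-Sch}), equivalently as zeros of the associated Fredholm determinant, and deduce analyticity by the analytic implicit function theorem. The inputs are: the kernel of $\mathrm L_\mu(K,z)$ is jointly real--analytic in $(K,z)$ on the set $z<\tau_{\mathrm{essspec}}(H_\mu(K))$, since $E(K;p,q)-z$ and $\Delta_\mu(K,p;z)$ are real--analytic and bounded away from $0$ there; the kernel $-\mu\,\Delta_\mu^{-1/2}(K,p;z)\Delta_\mu^{-1/2}(K,q;z)/(E(K;p,q)-z)$ is strictly positive (as $\mu<0$, $\Delta_\mu>0$, $E-z>0$), so $\mathrm L_\mu(K,z)$ is positivity improving and its top eigenvalue is simple with a positive eigenvector by Perron--Frobenius; and the top eigenvalue $\lambda(K,z):=\max\sigma(\mathrm L_\mu(K,z))$ is strictly monotone in $z$ (reflecting that $n_-[z,H_\mu(K)]=n_+[1,\mathrm L_\mu(K,z)]$ is monotone), so the crossing $\lambda(K,E_\mu(K))=1$ is transversal and the defining equation has nonvanishing $z$--derivative. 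The implicit function theorem then gives real--analyticity of $K\mapsto E_\mu(K)$, and Kato's analytic perturbation theory applied to the one--dimensional eigenprojection of $\mathrm L_\mu(K,z)$ at $z=E_\mu(K)$ gives a regular eigenvector $\psi=\psi_K$; feeding $\varphi=\Delta_\mu^{-1/2}\psi$ into the formula of (ii) yields an eigenfunction $f$ that is regular in $(p,q)$ (the denominator is a nonvanishing trigonometric--type function and $\psi$ has a smooth $\mathrm L_\mu$--kernel) and regular in $K$.

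I expect the crux to be (iii), and specifically two points. First, one must keep $E_\mu(K)$ uniformly separated from $\tau_{\mathrm{essspec}}(H_\mu(K))$ on the relevant $K$--neighbourhood, so that the denominators never collide with the continuum and the kernel stays analytic; this uses the continuity of $\mathrm L_\mu(K,z)$ in both variables from Lemma~\ref{Bir-Sch}. Second, the simplicity of the relevant eigenvalue is what makes Kato's theory immediately applicable; the Perron--Frobenius positivity argument supplies this, and without it one would be forced to track the total eigenprojection and argue branch--wise analyticity, which is the genuinely delicate scenario.
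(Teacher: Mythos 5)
Your proof of parts (i) and (ii) is essentially identical to the paper's: invert $H_0(K)-z$, express $f$ through $\varphi(p)=\int_{\T^d}f(p,t)\eta(dt)$, integrate in $q$ using the invariance of $E(K;p,q)$ under $q\mapsto K-p-q$ to fold the diagonal term into $\Delta_\mu(K,p;z)$, and symmetrise via $\psi=\Delta_\mu^{1/2}\varphi$; the reverse direction is run the same way in both texts. For (iii) the skeleton is also the paper's (eigenvalues below $\tau_{\mathrm{essspec}}(H_\mu(K))$ are zeros of the Fredholm determinant $D_\mu(K,z)=\det[I-\mathrm{L}_\mu(K,z)]$, then the implicit function theorem), but you add what the paper silently omits: a justification that the crossing is transversal, via strict positivity of the kernel (using $\mu<0$, $E-z>0$, $\Delta_\mu>0$ below the two-particle branch), Perron--Frobenius simplicity, and monotonicity of the top eigenvalue in $z$. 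That is a genuine improvement for the lowest eigenvalue $E_{1,\mu}(K)$; as you yourself note, it does not cover the higher eigenvalues $E_{2,\mu}(K),\dots,E_{n,\mu}(K)$, for which simplicity and a nonvanishing $\partial_z D_\mu$ are not guaranteed and one would have to track total eigenprojections branch-wise --- a gap present in the paper's argument as well, so you have not introduced any error the original avoids.
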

\begin{proof}
\begin{enumerate}
\item[(i)] Let for some $K\in\T^d$ and $z<\tau_{\mathrm{essspec}}(H_{\mu}(K))$
the equation
\begin{equation}(H_{\mu}(K)\psi)(p,q)=z\psi(p,q),
\end{equation} i.e., the equation
\begin{align}\label{t1}
&[E(K;p,q)-z]f(p,q)\\ \nonumber
&=-\mu[\;\int\limits_{\T^d}f(p,t)\eta(dt)+\int\limits_{\T^d}f(t,q)\eta(dt)+\int\limits_{\T^d}f(K-p-q,t)\eta(dt)]
\end{align}
has a solution $f\in L^{2,s}[({\T}^d)^2]$.

Denoting by
$$\varphi(p)=\int\limits_{\T^d}f(p,t)\eta(dt)$$ we
rewrite the equation \eqref{t1} as follows
\begin{equation}\label{solution}
f(p,q)=-\mu\frac{\varphi(p)+\varphi(q)+\varphi(K-p-q)}{E(K;p,q)-z}\,\in
L^{2,s}[({\T}^d)^2],
\end{equation}
which gives for $\varphi \in L^2({\T}^d)$ the equation
\begin{equation}\label{b-s}
\varphi(p)=-\mu\int\limits_{\T^d}
\dfrac{\varphi(p)+\varphi(t)+\varphi(K-p-t)}{E(K;p,t)-z}\eta(dt).
\end{equation}
Since the function $E(K;p,t)$ is invariant under  $K-p-t\rightarrow
t$, we have
\begin{align}
\varphi(p)\left[1+\mu\int\limits_{\T^d}
\dfrac{dq}{E(K;p,q)-z}\right]=2
\mu\int\limits_{\T^d}\dfrac{\varphi(q)}{E(K;p,q)-z}\eta(dq)\notag
\end{align}
Denoting by $\Delta^{\frac12}_\mu(K,p\:;z)\varphi(p)=\psi(p)$ and
taking into account the inequality
$\Delta_\mu(K,p\:;z)\neq0,\,z<\tau_{\mathrm{essspec}}(H_{\mu}(K))$
we get the equation
\begin{equation}\label{B-S}
\psi(p)=-2\mu\int\limits_{\T^d}
\dfrac{\Delta^{-\frac12}_{\mu}(K,p\:;z)\Delta^{-\frac12}_{\mu}(K,q\:;z)\psi(q)}{E(K;p,q)-z}\eta(dq).
\end{equation}
\item[(ii)] Assume that for some $z<\tau_{\mathrm{essspec}}(H_{\mu}(K))$
the function  $\psi\in L^2(\T^d)$ is a solution of the equation
\eqref{B-S}.Then $\varphi(p)=\Delta^{-\frac12}(K,p\:;z)\psi(p)\in
L^2(\T^d)$ is a solution of the equation \eqref{b-s}. Hence the
function defined by \eqref{solution} belongs $L^{2,s}[({\T}^d)^2]$
and is a solution of the Schr\"odinger equation $H_\mu(K)f = zf$,
i.e., $f$ is an eigenfunction of the operator $H_{\mu}(K)$
associated to the eigenvalue
$z<\tau_{\mathrm{essspec}}(H_{\mu}(K)).$

\item[(iii)]For all $\mu<0,\,K\in\T^d$ and
$z<\tau_{\mathrm{essspec}}(H_{\mu}(K))$ the kernel function
$$\mathrm{L}_{\mu}(K,z;p,q)=-2\mu \frac{\Delta^{-\frac{1}{2}}_\mu(K,p,z)
\Delta^{-\frac{1}{2}}_\mu(K,q, z)}{E(K;p,q)-z}$$ of the operator
$\mathrm{L}_{\mu}(K,z)$ is continuous in $p,q \in \T^d$. Hence, for
any $\mu<0$ and $K\in\T^d$ the Fredholm determinant
$D_\mu(K,z)=\det[I-\mathrm{L}_{\mu}(K,z)]$ associated to
$\mathrm{L}_{\mu}(K,z;p,q)$ is real and regular function in $z\in
(-\infty,\tau_{\mathrm{essspec}}(H_{\mu}(K)))$.

Lemma \ref{main} and the Fredholm theorem yield that each eigenvalue
$E_\mu(K)\in(-\infty,\tau_{\mathrm{essspec}}(H_{\mu}(K)))$ of the
operator $H_{\mu}(K)$ is a zero of the determinant $D_\mu(K,z)$ and
vice versa. Consequently, the compactness of the torus $\T^d$ and
implicit function theorem yield that  for each $\mu<0$ the
eigenvalue $E_\mu(K)$ of $H_{\mu}(K)$ is a regular function in $K
\in \T^d,\,d=1,2$.

Since the functions $\Delta_\mu(K,p\:;E_\mu(K))$ and
$E(K;p,q)-E_\mu(K)$ are regular in $K\in \T^d$ the solution $\psi\in
L^2[\mathbb{T}^d]$ of the equation \eqref{B-S} and hence the
function $\varphi$ are regular in $K\in \T^d$. Hence, the
eigenfunction \eqref{solution} of the operator $H_{\mu}(K)$
associated to eigenvalue
$E_\mu(K)<\tau_{\mathrm{essspec}}(H_{\mu}(K))$ is also regular in
$K\in \T^d$. Consequently, the vector valued mapping
\begin{equation*}\label{map}
f_{\mu}:\mathbb{T}^d \rightarrow
L^2[\mathbb{T}^d,\eta(dK);L^{2,s}[({\T}^d)^2]],\,K\rightarrow
f_{\mu,K}(\cdot,\cdot)
\end{equation*} is regular in $\mathbb{T}^d$.
\end{enumerate}
\end{proof}

Now we are going to proof the finiteness of the number
$N(K,\tau_{\mathrm{essspec}}(H_{\mu}(K)))$ of eigenvalues of the
three-particle Schr\"odinger operator $H_\mu(K),K\in U_{\delta}[0]$.

We postpone the proof of the main theorem after the following two
lemmas.
\begin{lemma}\label{estimate} Let $d=1,2.$ For any $K \in U_\delta(0)$  there are
positive nonzero constants $C_1$ and $C_2$ depending on $K$ and a
neighborhood $U_{\delta(K)}[p_\mu (K)]$ of the point $p_\mu (K)\in
\T^d$ such that for all $p\in U_{\delta(K)}[p_\mu (K)]$ the
following inequalities
\begin{equation*}\label{otsenka2}
 C_1|p -p_\mu (K)|^2 \leq
\Delta_\mu (K,p,\tau_{\mathrm{essspec}}(H_{\mu}(K)))\leq
C_2|p-p_\mu(K)|^2
\end{equation*}
hold.
\end{lemma}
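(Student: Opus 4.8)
The plan is to establish the two-sided quadratic bound by analyzing the behavior of $\Delta_\mu(K,p,z)$ near the point $p_\mu(K)$ at the threshold energy $z=\tau_{\mathrm{essspec}}(H_\mu(K))$. First I would identify $p_\mu(K)$ precisely: by Lemma \ref{inequality} we have $\tau_{\mathrm{essspec}}(H_\mu(K))=\inf_{k\in\T^d}Z_\mu(K,k)$, so $p_\mu(K)$ should be the minimizer of the function $k\mapsto Z_\mu(K,k)=e_\mu(k)+\varepsilon(K-k)$ on $\T^d$. The key observation is that by Lemma \ref{nollar2}, $z$ is an eigenvalue of the fiber operator $h_\mu(K,p)$ precisely when $\Delta_\mu(K,p,z)=0$; and at the threshold energy, $\Delta_\mu(K,p_\mu(K),\tau_{\mathrm{essspec}}(H_\mu(K)))=0$ because $Z_\mu(K,p_\mu(K))=\tau_{\mathrm{essspec}}(H_\mu(K))$ is exactly the eigenvalue $e_\mu(p_\mu(K))+\varepsilon(K-p_\mu(K))$ of $h_\mu(K,p_\mu(K))$. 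Thus $p_\mu(K)$ is a zero of the function $p\mapsto\Delta_\mu(K,p,\tau_{\mathrm{essspec}}(H_\mu(K)))$, and I would show it is a nondegenerate quadratic zero.

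Next I would Taylor-expand. By Theorem \ref{existencetwo} the eigenvalue $e_\mu(k)$ is a regular function of $k$, and $\varepsilon$ is smooth, so $Z_\mu(K,\cdot)$ is regular. Since $p_\mu(K)$ is an interior minimizer with $Z_\mu(K,p_\mu(K))=\tau_{\mathrm{essspec}}(H_\mu(K))$, the gradient of $Z_\mu(K,\cdot)$ vanishes at $p_\mu(K)$. I would then relate the Hessian of $Z_\mu(K,\cdot)$ at $p_\mu(K)$ to the $p$-derivatives of $\Delta_\mu$ via implicit differentiation of the identity $\Delta_\mu(K,p,Z_\mu(K,p))\equiv 0$ (valid for $p$ in a neighborhood where $Z_\mu(K,p)$ stays below the two-particle continuum $[\cE_{\min}(p),\cE_{\max}(p)]$, so that $Z_\mu$ is a genuine isolated eigenvalue). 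Differentiating gives
\begin{equation*}
\partial_p\Delta_\mu+\partial_z\Delta_\mu\cdot\partial_p Z_\mu=0,
\end{equation*}
so at $p=p_\mu(K)$, where $\partial_p Z_\mu=0$, the first $p$-derivative $\partial_p\Delta_\mu$ vanishes; differentiating once more shows the Hessian of $\Delta_\mu$ in $p$ equals $-\partial_z\Delta_\mu$ times the Hessian of $Z_\mu$. Since $\partial_z\Delta_\mu>0$ for $z$ below the continuum (the integrand $(E(K,p;q)-z)^{-1}$ is increasing in $z$ and $\mu<0$), and since $p_\mu(K)$ is a strict minimum of $Z_\mu$ with positive-definite Hessian, the Hessian of $\Delta_\mu$ at $p_\mu(K)$ is positive definite. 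Together with the vanishing of the value and gradient of $\Delta_\mu$ at $p_\mu(K)$, Taylor's theorem with the continuity of second derivatives yields constants $C_1,C_2>0$ and a neighborhood $U_{\delta(K)}[p_\mu(K)]$ on which the claimed bounds hold.

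The main obstacle I anticipate is establishing that $p_\mu(K)$ is a nondegenerate (strictly positive-definite Hessian) minimum of $Z_\mu(K,\cdot)$ uniformly for $K\in U_\delta(0)$, and verifying that $Z_\mu(K,p)$ remains strictly below the two-particle band $[\cE_{\min}(p),\cE_{\max}(p)]$ throughout the relevant neighborhood so that $\Delta_\mu$ is regular there and the implicit relation $\Delta_\mu(K,p,Z_\mu(K,p))=0$ can be differentiated. For $K$ near $0$ one expects $p_\mu(K)$ near the symmetric point $\tfrac{2K}{3}$ (cf.\ the computation in Lemma \ref{inequality}), and the strict convexity of $\varepsilon$ near its minimum together with the regularity and strict inequality $e_\mu(k)<\cE_{\min}(k)$ from Theorem \ref{existencetwo} should force nondegeneracy; making this quantitative and uniform in $K$ is the delicate point. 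The remaining estimates—continuity of $\partial_z\Delta_\mu$, its strict positivity, and the Taylor remainder control—are routine given the regularity already supplied by Theorem \ref{existencetwo}.
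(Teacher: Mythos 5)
Your approach is essentially the paper's: the paper shows that $p_\mu(K)$ is a nondegenerate minimum of $Z_\mu(K,\cdot)$ (by computing the Hessian of $e_\mu$ at $0$ and perturbing in $K$), expands $Z_\mu(K,p)-\tau_{\mathrm{essspec}}(H_{\mu}(K))$ quadratically, and factors $\Delta_\mu(K,p,z)=(z-Z_\mu(K,p))\,\hat\Delta_\mu(K,p,z)$ with $\hat\Delta_\mu$ nonvanishing near $p_\mu(K)$ --- the analytic-factorization counterpart of your implicit differentiation of $\Delta_\mu(K,p,Z_\mu(K,p))\equiv 0$. One sign needs correcting: $\partial_z\Delta_\mu(K,p;z)=\mu\int_{\T^d}(E(K;p,q)-z)^{-2}\eta(dq)<0$ because $\mu<0$ (your own parenthetical justification in fact shows this), so it is $-\partial_z\Delta_\mu>0$ that multiplies the positive-definite Hessian of $Z_\mu$; as literally written, your chain ``$\partial_z\Delta_\mu>0$, hence $-\partial_z\Delta_\mu\cdot\mathrm{Hess}\,Z_\mu$ positive definite'' would give a negative-definite Hessian of $\Delta_\mu$ and contradict the lower bound. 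With that sign fixed the argument goes through and coincides with the paper's, including your correct identification of the delicate point (uniform nondegeneracy of the minimum for $K\in U_\delta(0)$), which the paper likewise treats only briefly.
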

\begin{proof} We prove Lemma \ref{otsenka2} for the case $d=2$.
The point $p=0$ is the non degenerate minimum of the function
$\varepsilon(p)$, i.e.,
\begin{equation*} \label{eq1}
\varepsilon(p)=\frac{1}{2}p^2+O(|p|^4)\,\,\mbox{as}\,\,p\to 0.
\end{equation*}
Since the eigenvalue $e_\mu(p)$ lying below the essential spectrum
is a unique zero of the determinant $\Delta_\mu(p,z)$ associated to
operator $h_\mu(p)$,  simple computations gives
$$
\bigg ( \frac{\partial^2 e_\mu(0)}{\partial p^{(i)} \partial
p^{(j)}}\bigg )_{i,j=1}^2= C\bigg (\begin{array}{lll}
1\,\,0\\
0\,\,1\\
\end{array}
\bigg ),\,C>0.
$$
Analogously, the eigenvalue $Z_\mu(0,p)$ of the operator
$h_\mu(0,p)$, lying below the essential spectrum, is unique zero of
the determinant $\Delta_\mu(0,p,z)$  and hence the point
$p=p_\mu(0)=0\in \T^2$ is non-degenerate minimum of
\begin{equation*}
Z_\mu(0,p):= e_\mu(p)+\varepsilon(p).
\end{equation*}
Therefore for any $K \in U_\delta(0)$ the point $p_\mu(K)\in
U_\delta(0)$ is non degenerate minimum of the function $Z_\mu(K,p)$
and the matrix
$$
B(K)=\bigg(\frac{\partial^2 Z_{\mu}(K,p_{\mu}(K))}{\partial p^{(i)}
\partial p^{(j)}}\bigg )_{i,j=1}^2
$$
is positive definite. Hence, the eigenvalue $Z_\mu(K,p)$ has
following asymptotics
\begin{equation}\label{Z}
Z_\mu(K,p)=\tau_{\mathrm{essspec}}(H_{\mu}(K))+
(B(K)(p-p_\mu(K)),p-p_\mu(K)) +o(|p-p_\mu(K)|^2),
\end{equation}
as $|p-p_\mu(K)| \to 0,$ where
$\tau_{\mathrm{essspec}}(H_{\mu}(K))=Z_\mu(K,p_\mu(K)).$

For any $K,p \in \T^d$ there exists a $\gamma=\gamma(K,p)>0$
neighborhood $W_\gamma(Z_\mu(K,p))$ of the point $Z_\mu(K,p)\in
\mathrm{C}$ such that for all $z \in W_\gamma(Z_\mu(K,p))$ the
following equality holds
\begin{equation*}
\Delta_\mu (K,p,z)=\sum_{n=1}^{\infty}C_n(\mu,
K,p)(z-Z_\mu(K,p))^n,\\
\end{equation*}
where
\begin{align*}
C_1(\mu,K,p)=-\mu \int\limits_{\T^d}
\dfrac{dq}{[E(K;p,q)-Z_\mu(K,p)]^2}< 0.
\end{align*}
From here one can concludes that for any $K\in U_\delta(0)$ there is
$U_{\delta(K)}(p_{\mu}(K))$ and for all $p\in
U_{\delta(K)}(p_{\mu}(K))$ the equality
\begin{equation}\label{nondeger}
\Delta_\mu(K,p,\tau_{\mathrm{essspec}}(H_{\mu}(K)))
=[Z_\mu(K,p)-\tau_{\mathrm{essspec}}(H_{\mu}(K))]\hat
\Delta_\mu(K,p,\tau_{\mathrm{essspec}}(H_{\mu}(K)))
 \end{equation}
holds, where $\hat
\Delta_\mu(K,p_\mu(K),\tau_{\mathrm{essspec}}(H_{\mu}(K))\neq 0.$
Putting \eqref{nondeger} into \eqref{Z} proves Lemma \ref{estimate}.
\end{proof}
\begin{lemma}\label{compact} Let $K\in U_\delta(0)$.
The operator $\mathrm{L}_\mu(K,z)$ can be represented  as sum of the
two operators
$$\mathrm{L}_\mu(K,z)=\mathrm{L}_\mu^{(1)}(K, z) +\mathrm{L}_\mu^{(2)}(K,z),$$ where
the operator $\mathrm{L}_{\mu}^{(1)}(K,z),
z<\tau_{\mathrm{essspec}}(H_{\mu}(K)$ has finite rank and
$\mathrm{L}_\mu^{(2)} ( K,z),
z\leq\tau_{\mathrm{essspec}}(H_{\mu}(K)$ belongs to the
Hilbert-Schmidt class.\end{lemma}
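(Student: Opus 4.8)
The plan is to isolate the non--square--integrable part of the kernel near the unique minimum point $p_\mu=p_\mu(K)$ of $Z_\mu(K,\cdot)$ and absorb it into a finite--rank operator, leaving a kernel that is square--integrable uniformly up to the threshold $\tau:=\tau_{\mathrm{essspec}}(H_\mu(K))$. First I write the kernel of $\mathrm{L}_\mu(K,z)$ as
$$\mathrm{L}_\mu(K,z;p,q)=\Delta_\mu^{-1/2}(K,p,z)\,G(K,z;p,q)\,\Delta_\mu^{-1/2}(K,q,z),\qquad G(K,z;p,q)=\frac{-2\mu}{E(K;p,q)-z}.$$
By Lemma \ref{inequality} one has $E(K;p,q)-z\ge E_{\min}(K)-\tau>0$ for every $z\le\tau$, so $G$ is smooth in $(p,q)$ with all derivatives bounded, uniformly in $z\le\tau$. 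The only obstruction to the Hilbert--Schmidt property is therefore the weight $\Delta_\mu^{-1/2}$: from the monotonicity $\partial_z\Delta_\mu(K,p,z)=\mu\int_{\T^d}(E(K,p;q)-z)^{-2}\eta(dq)<0$ together with Lemma \ref{estimate} we get $\Delta_\mu(K,p,z)\ge\Delta_\mu(K,p,\tau)\ge C_1|p-p_\mu|^2$ for $z\le\tau$ in a neighborhood of $p_\mu$, hence $\Delta_\mu^{-1/2}(K,p,z)\le C|p-p_\mu|^{-1}$ there, while $\Delta_\mu$ stays bounded below away from $p_\mu$.

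Next I fix a smooth cutoff $\chi$ equal to $1$ near $p_\mu$ and supported in the neighborhood where the quadratic bound holds, and split $\mathrm{L}_\mu$ according to $1=\chi+(1-\chi)$ in each variable. Wherever a factor $(1-\chi)$ occurs the corresponding weight is bounded; there I subtract, for the remaining singular variable, the rank--one operator obtained by freezing $G$ at $p_\mu$ in that variable (for instance replacing $G(K,z;p,q)$ by $G(K,z;p_\mu,q)$ in the term $\chi(p)\,\mathrm{L}_\mu\,(1-\chi)(q)$). Since $G(K,z;p,q)-G(K,z;p_\mu,q)=O(|p-p_\mu|)$ and $\Delta_\mu^{-1/2}(K,p,z)\,|p-p_\mu|$ is bounded, the resulting remainder has a bounded kernel and is Hilbert--Schmidt uniformly in $z\le\tau$. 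On the diagonal region $\chi(p)\chi(q)$ I subtract the rank--three operator built from the mixed second difference, i.e. I replace $G(K,z;p,q)$ by $G(K,z;p,p_\mu)+G(K,z;p_\mu,q)-G(K,z;p_\mu,p_\mu)$. The leftover factor
$$D(K,z;p,q)=G(K,z;p,q)-G(K,z;p,p_\mu)-G(K,z;p_\mu,q)+G(K,z;p_\mu,p_\mu)$$
vanishes whenever $p=p_\mu$ or $q=p_\mu$, so by Hadamard's lemma $|D(K,z;p,q)|\le C|p-p_\mu|\,|q-p_\mu|$ uniformly in $z\le\tau$; multiplied by the two weights this gives the bounded kernel bound $|\chi(p)\chi(q)\,\Delta_\mu^{-1/2}(K,p,z)\,D\,\Delta_\mu^{-1/2}(K,q,z)|\le C'\chi(p)\chi(q)$.

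Collecting all subtracted pieces into $\mathrm{L}_\mu^{(1)}(K,z)$ and the remainder into $\mathrm{L}_\mu^{(2)}(K,z)$ yields the desired splitting. For $z<\tau$ the weight $\Delta_\mu^{-1/2}(K,\cdot,z)$ lies in $L^2(\T^d)$, so every subtracted term is a genuine rank--one operator and $\mathrm{L}_\mu^{(1)}(K,z)$ has finite rank; by the three estimates above $\mathrm{L}_\mu^{(2)}(K,z)$ has a kernel dominated by a fixed function in $L^2(\T^d\times\T^d)$ for all $z\le\tau$, hence lies in the Hilbert--Schmidt class, including at $z=\tau$. I expect the decisive point to be the diagonal region in dimension $d=2$: a single subtraction at the corner $(p_\mu,p_\mu)$ leaves a term behaving like $|q-p_\mu|^{-1}$ in one variable, whose square is only logarithmically non--integrable, so it is essential to match $G$ along the entire cross $\{p=p_\mu\}\cup\{q=p_\mu\}$ (the mixed second difference) rather than merely at the point $p_\mu$, and to verify that the vanishing order $|p-p_\mu|\,|q-p_\mu|$ of $D$ exactly compensates the product of the two weights $\Delta_\mu^{-1/2}$.
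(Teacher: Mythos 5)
Your decomposition is essentially the paper's own: the piece you subtract is exactly the paper's finite--rank kernel
$\mathrm{L}_\mu^{(1)}(K,z;p,q)=\frac{1}{E(K;p,p_\mu(K))-z}+\frac{1}{E(K;p_\mu(K),q)-z}-\frac{1}{E(K;p_\mu(K),p_\mu(K))-z}$,
i.e.\ the mixed second difference matched along the cross $\{p=p_\mu(K)\}\cup\{q=p_\mu(K)\}$, and the Hilbert--Schmidt property of the remainder follows from the same compensation of the two weights $\Delta_\mu^{-1/2}$ via Lemma \ref{estimate}. Your argument is correct; it even makes explicit the decisive bound $|D(K,z;p,q)|\le C|p-p_\mu(K)|\,|q-p_\mu(K)|$ that the paper only gestures at by noting that the kernel of $\mathrm{L}_\mu^{(2)}$ vanishes at $(p_\mu(K),p_\mu(K))$.
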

\begin{proof}
We represent the operator $\mathrm{L}_\mu (K,z)$ as sum of two
operators
$$\mathrm{L}_\mu(K, z)=\mathrm{L}_\mu^{(1)}(K,z)
+\mathrm{L}_\mu^{(2)}(K,z),$$ where
\begin{align*}
&[\mathrm{L}_\mu^{(1)}( K, z)w](p)=2\mu\int\limits_{\T^d}
\frac{\mathrm{L}_\mu^{(1)}(K,z;p,q)w(q) \eta (dq)}
{\Delta^{\frac{1}{2}}_\mu(K,p, z) \Delta^{\frac{1}{2}}_\mu(K, q,
z)},
\end{align*}
is the finite rank operator, where
\begin{align*} &\mathrm{L}_\mu^{(1)}(K,z;p,q)\\
&=\frac{1}{E(K;p,p_\mu (K))-z}+\frac{1}{E(K;p_\mu(K),q)-z}
-\frac{1}{E(K;p_\mu (K),p_\mu (K))-z}
\end{align*}
and
\begin{align*}\label{comp}
&[\mathrm{L}_\mu^{(2)}( K, z)w](p)= 2\mu
\int\limits_{\T^d}\frac{\mathrm{L}_\mu^{(2)}(K,z;p,q)w(q)
 \eta(dq)}{\Delta^{\frac{1}{2}}_\mu(K, p, z \Delta^{\frac{1}{2}}_\mu (K, q,
z)},
\end{align*}
where
\begin{equation*}
\mathrm{L}_\mu^{(2)}( K, z; p,q)=
\frac{1}{E(K;p,q)-z}-\mathrm{L}_\mu^{(1)}(K,z;p,q).
\end{equation*}
For any $z<\tau_{\mathrm{essspec}}(H_{\mu}(K))$ the kernel
$\mathrm{L}_\mu^{(2)}(K,z;p,q)$ of the operator
$\mathrm{L}_\mu^{(2)}(K, z)$ is a regular function at the point
$(p_\mu (K),p_\mu (K))$ and $\mathrm{L}_\mu^{(2)}( K, z; p_\mu
(K),p_\mu (K))=0$.

Lemma \ref{inequality} yields the inequality
\begin{equation*}\label{inequal3} E(K;p,q)
-\tau_{\mathrm{essspec}}(H_{\mu}(K))\geq
E_{\min}(K)-\tau_{\mathrm{essspec}}(H_{\mu}(K))>0.
\end{equation*}
Hence the functions
$$[E(K;p,q)-\tau_{\mathrm{essspec}}(H_{\mu}(K))]^{-1}-\mathrm{L}_\mu^{(1)}(K,\tau_{\mathrm{essspec}}(H_{\mu}(K));p,q)$$
is regular in $(p,q) \in \T^2.$

So, the operator
$\mathrm{L}_\mu^{(2)}(K,\tau_{\mathrm{essspec}}(H_{\mu}(K))), K\in
U_\delta(0)$ belongs to the Hilbert-Schmidt class.
\end{proof}

{\bf Proof of Theorem \ref{finite}.} For any compact operators
$A_1,\, A_2$ and  positive numbers  $\lambda_1,\,\lambda_2$ Weyl's
inequality
$$n_+[\lambda_1+\lambda_2,A_1+A_2]\leq
n_+[\lambda_1,A_1]+n_+[\lambda_2,A_2]$$ gives that for all
$z<\tau_{\mathrm{essspec}}(H_{\mu}(K)),\ K\in U_\delta(0)$ the
relations
$$n_+[1,\mathrm{L}_\mu(K,z)]\leq n_+[\frac{1}{3},
\mathrm{L}_\mu^{(1)}(K, z)]+ n_+[\frac{2}{3},\mathrm{L}_\mu^{(2)}(
K,z])\leq
$$
$$\leq n_+[\frac{1}{3}
,\mathrm{L}_\mu^{(1)}(K,z)]+n_+[\frac{1}{3},\mathrm{L}_\mu^{(2)}(
K,\tau_{\mathrm{essspec}}(H_{\mu}(K)))]+$$$$+
n_+[\frac{1}{3},\mathrm{L}_\mu^{(2)}(K,z)-\mathrm{L}_\mu^{(2)}(
K,\tau_{\mathrm{essspec}}(H_{\mu}(K))].
$$ hold.
The compactness of $\mathrm{L}_\mu^{(2)}(
K,\tau_{\mathrm{essspec}}(H_{\mu}(K))), K\in U_\delta(0)$ yields
$$n_+[\frac{1}{3},\mathrm{L}_\mu^{(2)}(K,\tau_{\mathrm{essspec}}(H_{\mu}(K)))]<\infty.$$
The operator $\mathrm{L}_\mu^{(2)}(K,z)$ is continuous in $z\leq
\tau_{\mathrm{essspec}}(H_{\mu}(K))$ and converges to \\
$\mathrm{L}_\mu^{(2)}(K,\tau_{\mathrm{essspec}}(H_{\mu}(K)))$ in
uniform operator topology. Therefore for all sufficiently small
$|\tau_{\mathrm{essspec}}(H_{\mu}(K))-z|$ we have
$$n_+[\frac{1}{3},\mathrm{L}_\mu^{(2)}(K,z)-\mathrm{L}_\mu^{(2)}(
K,\tau_{\mathrm{essspec}}(H_{\mu}(K)))]=0.$$

The dimension of rank of $\mathrm{L}_\mu^{(1)}(K,z), z<
\tau_{\mathrm{essspec}}(H_{\mu}(K))$ does not depend on $z<
\tau_{\mathrm{essspec}}(H_{\mu}(K))$ and
$$n_+[\frac{1}{3} ,\mathrm{L}_\mu^{(1)}(K,z)]<\infty.$$
Lemma \ref{Bir-Sch} yields that the operator
$\mathrm{L}_\mu(K,\tau_{\mathrm{essspec}}(H_{\mu}(K))), K\in
U_\delta(0)$ has a finite number eigenvalues greater than $1$ and
consequently by Lemma \ref{Bir-Sch} the operator $H_\mu(K),K\in
U_\delta(0)$ has a finite number of eigenvalues in the interval
$(-\infty, \tau_{\mathrm{essspec}}(H_{\mu}(K))).$

From Lemma \ref{compact} one concludes that if
$E_\mu(K)<\tau_{\mathrm{essspec}}(H_{\mu}(K))$ is an eigenvalue of
the operator $H_{\mu}(K)$ then the associated eigenfunction is of
the form \eqref{solution}.

{\bf Acknowledgements} This work was supported by the Fundamental
Science Foundation of Uzbekistan.The first author would like to
thank Fulbright program for  supporting the research project during
2013-2014 academic year.


\begin{thebibliography}{99}

\bibitem{ALM}  {\sc S.~Albeverio, S.N.~Lakaev, K.~A.~Makarov}: The Efimov Effect and
an Extended Szego-Kac Limit Theorem, Letters in Math.Phys,V.{\bf
43}(1998), 73-85.
\bibitem{ALMM06}  {\sc S.~ Albeverio, S.~N.~Lakaev, K.~A.~Makarov,
 Z.~I.~Muminov}: { The Threshold Effects for the Two-particle Hamiltonians on Lattices,}
 Comm.Math.Phys. {\bf 262}(2006), 91--115 .

\bibitem{ALzM04}  {\sc S.~ Albeverio, S.~N.~Lakaev and
 Z.~I.~Muminov}: { Schr\"{o}dinger operators on lattices. The Efimov effect and
discrete spectrum asymptotics.} Ann. Henri Poincar\'{e}. {\bf 5},
(2004),743--772.
\bibitem{ALKh} {\sc Albeverio, S. N. Lakaev and A. M. Khalkhujaev:} Number of eigenvalues
of the three-particle Schr\"odinger operators on lattices, Markov
Process. Related Fields, 18(3):387--420 2012 ISSN: 1024-2953

\bibitem{Efimov} {\sc V. Efimov :} Energy Levels Arising from Resonant Two-Body Forces in
a Three-Body System, Physics Letters, {\bf 33}B(8), 1970 pp.
563–564.

\bibitem{Faddeev}  {\sc L. D. Faddeev}: Mathematical aspects of the three--body
problem in quantum mechanics. Israel Program for Scientific
Translations, Jerusalem, 1965.
\bibitem{FadMer}  {\sc L. D. Faddeev} and {\sc S. P. Merkuriev}: Quantum
scattering theory for several particle systems. Kluwer Academic
Publishers, 1993.

states of the cluster operator, Theor.and Math.Phys.{\bf 39}(1979),
No.1,336-342.
\bibitem{Lak93}  {\sc S.~N.~Lakaev}: {The Efimov's Effect of a system of Three
Identical Quantum lattice Particles,} Funkcionalnii analiz i ego
priloj. , {\bf 27}(1993), No.3, pp.15-28, translation in Funct.
Anal.Appl.
\bibitem{LXL}  {\sc ~S.~N.Lakaev, A.M.Khalkhuzhaev and Sh.S.Lakaev:} Asymptotic behavior of an eigenvalue
of the two-particle discrete Schr\"{o}dinger operator. Theor. Math.
Phys. {\bf 171(3)}(2012), 800-811.
\bibitem{Mat86}  {\sc D.C.Mattis}:The few-body problem on lattice, Rev.Modern
Phys. {\bf 58}(1986), No. 2, 361-379
\bibitem{Mog91}{\sc ~A.~I. ~Mogilner}: Hamiltonians of solid state physics at
few-particle discrete Schr\"odinger operators: problems and results,
Advances in Sov. Math., {\bf 5} (1991), 139-194.

\bibitem{MumAli12}{\sc M.E. Muminov, N.M. Aliev:} Spectrum of the three-particle Schr\"odinger operator on a
one-dimensional lattice, Theoretical and Mathematical Physics,
{\bf171} (2012), pp 754-768.
\bibitem{Pankov A} {\sc A.~A. Pankov :} Lecture notice operators on Hilbert space. New York: Nova Science Publishers, 2007.

\bibitem{RSIII} {\sc M.Reed and B.Simon:}  Methods of modern mathematical physics. III:
Scattering theory,
 Academic Press, New York, 1979.

\bibitem{RSIV} {\sc M.Reed and
B.Simon:} {\it Methods of modern mathematical physics. IV: Analysis
of Operators,}
 Academic Press, New York, 1979.

\bibitem{Sob}  {\sc A.V. Sobolev}: The Efimov effect. Discrete spectrum
asymptotics, Commun. Math. Phys. {\bf 156} (1993), 127--168. S.

\bibitem{Tam}  {\sc H. Tamura}: Asymptotics for the number of negative eigenvalues of three-body
Schr\"{o}dinger operators with Efimov effect. Spectral and
scattering theory and applications, Adv. Stud. Pure Math., {\bf
23}(1994), Math. Soc. Japan, Tokyo, 311--322.

\bibitem{VZh83} {\sc S. A. Vugal'ter, G. M. Zhislin:} On finiteness of the discrete spectrum
of the energy operators of multiatomic molecules, Theoret. and Math.
Phys., 55:1 (1983), 357–365

\bibitem{Yaf}  {\sc D. R. Yafaev :} On the theory of the discrete spectrum of
the three-particle Schr\"{o}dinger operator, Math. USSR--Sb. {\bf
23} (1974), 535--559.
\bibitem{Letters}  {\sc K. Winkler,} {\sc G. Thalhammer,} {\sc F. Lang,} {\sc R. Frimm}
{\sc J. H. Denschlag,} {\sc A. J. Daley,} {\sc A. Kantian,} {\sc H.
P. B\"{u}chler} and {\sc P. Zoller:} Repulsively bound atom pairs in
an optical lattice. LETTERS. Vol. 441$|$15 June 2006$|$ {
doi:10.1038/nature04918} nature

\end{thebibliography}
\end{document}